\documentclass[10pt,a4paper]{article}
\usepackage[english]{babel}
\usepackage{algorithm}
\usepackage{algorithmic}
\frenchspacing
\linespread{1}

\usepackage{amsfonts}
\usepackage{amsmath}
\usepackage{theorem}
\usepackage{amssymb}
\usepackage{epsfig}
\usepackage{enumerate}
\usepackage{subfigure}
\usepackage{esint}

\newcommand{\CC}{\mathbb{C}}
\newcommand{\NN}{\mathbb{N}}
\newcommand{\RR}{\mathbb{R}}

\newcommand{\ints}{\int\limits}
\newcommand{\lam}{\lambda}

\newcommand{\vp}{\varphi}

\newcommand{\pa}{\partial}

\newcommand{\DD}{\mathcal{D}}

\DeclareMathOperator{\IM}{\mathrm{Im}}

\theoremstyle{plain}
\newtheorem{theorem}{Theorem}[section]

\newtheorem{lemma}[theorem]{Lemma}
\newtheorem{proposition}[theorem]{Proposition}

\theoremstyle{definition}

\newenvironment{proof}{
  \noindent{\it Proof.}\ }{\hspace*{\fill}
  \begin{math}\Box\end{math}\medskip}

\newcommand{\tablefont}[1]{\fontsize{2.5mm}{2.5mm}\selectfont}
\usepackage{array}
\newcolumntype{C}[1]{>{\centering\let\newline\\\arraybackslash\hspace{0pt}}m{#1}}
\newcolumntype{L}[1]{>{\raggedright\let\newline\\\arraybackslash\hspace{0pt}}m{#1}}
\newcolumntype{R}[1]{>{\raggedleft\let\newline\\\arraybackslash\hspace{0pt}}m{#1}}

\begin{document}

\title{A computational method for the Helmholtz equation in unbounded domains based on the minimization of an integral functional}

\author{Giulio Ciraolo\thanks{\footnotesize Department of Mathematics and Informatics, Universit\`a di Palermo, Via Archirafi 34, 90123 Palermo, Italy. \newline E-email: g.ciraolo@math.unipa.it, gargano@math.unipa.it, sciacca@math.unipa.it} \thanks{\footnotesize Corresponding author. \emph{E-mail address}: g.ciraolo@math.unipa.it (G. Ciraolo).}  \and Francesco Gargano\footnotemark[1] \and Vincenzo Sciacca\footnotemark[1]}

\maketitle

\begin{abstract}
We study a new approach to the problem of transparent boundary
conditions for the Helmholtz equation in unbounded domains. Our
approach is based on the minimization of an integral functional arising from a volume integral formulation of
the radiation condition. The index of refraction does not need to be constant at infinity and may have some angular dependency as well as perturbations. We prove analytical results on the convergence of the approximate solution. Numerical examples for different shapes of the artificial boundary and for non-constant indexes of refraction will be presented.
\end{abstract}

\section{Introduction}
Many problems in physical applications are modeled by wave propagation in unbounded domains. Computing the numerical solution is a challenging problem and, usually, one has to introduce an artificial boundary to make the computational domain finite. The problem of determining the boundary conditions on the artificial boundary has attracted many scientists and many methods have been studied leading to a very exciting field of mathematical research.

In this paper, we consider time-harmonic electromagnetic wave propagation in unbounded domains. Let $D\subset \RR^d,\: d \geq 2,$ be a (possibly empty) bounded domain. We consider the {\it Helmholtz equation}
\begin{equation}\label{eq: introd helm 1}
\Delta u + k^2n(x)^2 u = f,\quad \textmd{in } \RR^d \setminus \overline{D},
\end{equation}
where $k>0$ is called the {\it wavenumber}, $n>0$ is the index of refraction (which satisfies some assumptions to be specified later), $f$ is a source term and $u$ is part of the electromagnetic field. If $D$ is not empty, boundary conditions on $\partial D$ are given. To ensure the uniqueness of \eqref{eq: introd helm 1}, it is necessary to introduce a radiation condition at infinity. If $n\equiv 1$ in the exterior of some ball, we can consider the following Sommerfeld radiation condition
\begin{equation}\label{eq: introd rad cond Somm}
\lim_{r\to \infty} r^{\frac{d-1}{2}}\Big(\frac{\pa u}{\pa r} - iku \Big) =0,
\end{equation}
where $r=|x|$ is the distance from the origin and $i$ is the imaginary unit. The radiation condition \eqref{eq: introd rad cond Somm} follows from the expression of the far-field of the radiating solution. We will often refer as the solution of \eqref{eq: introd helm 1}-\eqref{eq: introd rad cond Somm} as the \emph{outgoing solution}. We shall discuss later the case when the index of refraction $n$ is not constant outside any bounded domain.

Depending on the assumptions on $n$, an exact formula for the solution of \eqref{eq: introd helm 1} may or may not exist (see for instance \cite{CK},\cite{Le}); however, when it exists, it may be intractable from a computational point of view. Thus, it is usually convenient to introduce an artificial computational \emph{bounded} domain $\Omega$, with $\Gamma= \partial \Omega$, and then impose some boundary conditions on $\Gamma$ in such a way that the solution $u_\Omega$ of the new problem approximates $u$ in a good fashion, i.e. one has to try to eliminate spurious reflections of waves arising from the introduction of the artificial boundary $\Gamma$.

As it is well-known, a large amount of work has been done on this kind of problems. The most used methods are based on \emph{local or nonlocal conditions} involving $u$ (\cite{BT},\cite{EM},\cite{Gi4}), approximations of the \emph{Dirichlet-to-Neumann map} (\cite{KG},\cite{GiK},\cite{GrK}), \emph{infinite element schemes} (\cite{Be1},\cite{Be2}), \emph{boundary element methods} (see \cite{SS}) and the \emph{perfectly matched layer method} (\cite{Ber},\cite{TY}). Many papers have been written of this subjects and for a deeper understanding of these problems and more recent developments, the interested reader can refer to \cite{As},\cite{Be2},\cite{Gi1},\cite{Gi2},\cite{Gi3},\cite{Hag},\cite{Har},\cite{Ih},\cite{Ts} and references therein.

In this paper, we propose a new approach to the study of wave propagation problems in unbounded domains. Let us start from the simplest case of finding the \emph{outgoing solution} $u$ of the Helmholtz equation in $\RR^d$ with constant index of refraction, i.e. to find $u$ satisfying
\begin{equation} \label{eq: introd Helm toy}
\Delta u + k^2 u = f,\quad \textmd{in } \RR^d,
\end{equation}
and the radiation condition \eqref{eq: introd rad cond Somm}.

It is well known that other radiation conditions than \eqref{eq: introd rad cond Somm} can be imposed at infinity to have the well-posedness of the problem. One of the most used is the following Rellich-Sommerfeld radiation condition
\begin{equation*}
\lim_{r\to \infty} \ints_{\pa B_r} \Big| \frac{\pa u}{\pa r} - iku \Big|^2 d\sigma = 0,
\end{equation*}
which was proposed by Rellich in \cite{Rel}. In the same paper, Rellich proved that a radiation condition can
also be given in the form
\begin{equation}\label{eq: introd rellich}
\ints_{\RR^d} \Big| \frac{\pa u}{\pa r} - ik u \Big|^2 dx <
+\infty.
\end{equation}
Condition \eqref{eq: introd rellich} can be considered as the starting point for our work, as we are going to explain shortly.

A naive way to look at problem \eqref{eq: introd Helm toy}-\eqref{eq: introd rellich} is the following:
\begin{equation}\label{eq: introd Problem Toy}
\textmd{ Minimize } \ints_{\RR^d} \Big| \frac{\pa u}{\pa r} - ik u \Big|^2 dx \textmd{ among the solutions of } \Delta u + k^2 u = f \textmd{ in } \RR^d.
\end{equation}
Clearly, the outgoing solution is the unique minimizer of \eqref{eq: introd Problem Toy}: it is the only solution of \eqref{eq: introd Helm toy} that satisfies \eqref{eq: introd rellich}, since the integral in \eqref{eq: introd rellich} does not converge for any other solution of \eqref{eq: introd Helm toy}. \\
Hence, when we introduce an artificial boundary $\Omega$, it is natural to consider the following constrained optimization problem:
\begin{equation}\label{eq: introd Problem Toy bounded}
\inf \Big\{\ints_{\Omega} \Big| \frac{\pa v}{\pa r} - ik v \Big|^2 dx:\ \  \Delta v + k^2 v = f \ \textmd{ in } \Omega \Big\}.
\end{equation}
As the domain $\Omega$ enlarges, it is reasonable to think that the minimizer of \eqref{eq: introd Problem Toy bounded} approximates the solution of \eqref{eq: introd Helm toy}-\eqref{eq: introd rellich}. However, for a constant index of refraction, available methods in literature are very efficient and well-established. For this reason, in this paper we shall consider more general indexes of refraction.

We now explain in details our method and the outline of the paper. We consider the Helmholtz equation
\begin{equation}\label{eq: introd Helm Perth Vega}
\Delta u + k^2 n(x)^2u = f, \quad \textmd{in } \RR^d;
\end{equation}
here the index of refraction $n$ is not necessarily constant at infinity but can have
an angular dependency like
\begin{equation*}
n(x) \to n_\infty(x/|x|) \quad \textmd{as } |x| \to +\infty.
\end{equation*}
Under some appropriate assumptions on this convergence and on $n_\infty$, in \cite{PV} (see Theorem 1.4) the authors proved that a Sommerfeld condition at infinity still holds true under the form
\begin{equation}\label{eq: intro rad con Perth Vega II}
\ints_{\RR^d} \Big| \nabla u(x) - ikn(x) u(x) \frac{x}{|x|} \Big|^2 \frac{dx}{1+|x|} < + \infty.
\end{equation}
There are other conditions of this type available in literature; we shall not discuss about them here and refer to \cite{PV},\cite{Sa},\cite{Zu} and references therein.

Let $\Omega$ be a bounded domain and consider the following minimization problem
\begin{equation}\label{eq: introd minimiz problem PV}
\inf \{ J_\Omega[v]:\ v \textmd{ satifies } \Delta v + k^2 n(x)^2 v = f \textmd{ in } \Omega\},
\end{equation}
where
\begin{equation}\label{eq: introd Funzionale}
J_\Omega[v]= \ints_{\Omega} \Big| \nabla v(x) - ikn(x) v(x) \frac{x}{|x|} \Big|^2 \frac{dx}{1+|x|}.
\end{equation}
The minimizer $u_\Omega$ of \eqref{eq: introd minimiz problem PV} is the approximation for the solution $u$ of \eqref{eq: introd Helm Perth Vega}-\eqref{eq: intro rad con Perth Vega II} that we propose in this work.

Problem \eqref{eq: introd minimiz problem PV} can be easily implemented numerically: we have to minimize a quadratic functional in a bounded domain subject to a linear constrain. We shall prove that when the computational domain enlarges and tends to cover the whole space, the approximating solution $u_\Omega$ converges in $H_{loc}^1$ norm to the outgoing solution. However, when $\Omega$ is a ball and $n$ is constant, most of the numerical methods cited before are more suitable and efficient for computing the solution than our approach.

When the artificial domain $\Omega$ can not be chosen as a ball or as other domains with simple geometry, most of the methods available in literature may present some difficulty in their applications. Since we do not use any technique of separation of variables or analytic continuation, the implementation of our approach do not suffer the change of geometry of the artificial domain and we will show by numerical examples that such a change does not affect the computations.

Our method can be applied also when the index of refraction is not constant outside any compact domain. In particular, we are thinking to an index of refraction that has some angularly dependency and may have some perturbation extending towards infinity. This is another relevant case in which our method is still applicable and of easy implementation. On the contrary, many of the methods cited before can not be applied (at least in a standard way).

The paper is organized as follows. In Section \ref{section 2} we prove a result of convergence for $u_\Omega$ and other analytical results when the artificial domain is a ball. In Section \ref{section 3} we give some numerical results on the convergence of the solution for a constant index of refraction for several choices of the artificial domain. In Section \ref{section 4} we present some numerical examples for non-constant indexes of refraction.

\section{Mathematical framework and analytical results} \label{section 2}
In this section we assume that the artificial domain is a ball and prove results of convergence as the radius of the ball goes to infinity under quite general assumptions on $n$. To simplify the exposition, firstly we study equation \eqref{eq: introd helm 1} with $D=\emptyset$ and then the convergence result is generalized to the case $D \neq \emptyset $ with Dirichlet or Neumann boundary conditions on $\partial D$.

Let $U:\RR^d \to \CC$ be the solution of problem \eqref{eq: introd Helm Perth Vega}-\eqref{eq: intro rad con Perth Vega II}. In what follows, we shall assume that \begin{equation}\label{hp n 0}
n \in L^\infty(\RR^d) \quad \textmd{and} \quad n(x) \geq n_* >0 \textmd{ for every } x \in \RR^d,
\end{equation}
for some positive $n_*$. We mention that the existence and uniqueness of the solution $U$ is a challenging problem and they are usually obtained by using a limiting absorption principle. In this paper we will assume that $U$ exists and is unique and refer to the papers cited in the Introduction for the appropriate assumptions on $n$.

Let $B_R \subset \RR^d$ be the ball of radius $R$ centered at the origin. For a solution $u$ of
\begin{equation}\label{eq: section 2 Helm Omega}
\Delta u + k^2 n(x)^2 u = f \quad \textmd{in } B_R,
\end{equation}
we mean a function $u\in H^1(B_R)$ satisfying the variational formulation of \eqref{eq: section 2 Helm Omega}:
\begin{equation}\label{eq: section 2 Helm var formul}
-\ints_{B_R} \nabla u \cdot \nabla \overline{\vp} dx + k^2 \ints_{B_R} n(x)^2 u \overline{\vp}dx =
\ints_{B_R} f \overline{\vp}dx, \quad  \forall\ \vp \in C_0^1(B_R).
\end{equation}
We denote by $\mathcal{D}_R$ the set of solutions of \eqref{eq: section 2 Helm var formul} in $B_R$:
\begin{equation}\label{eq: section 2 D_R}
\mathcal{D}_R= \{ u \in H^1(B_R):\ u\ \textmd{ satifies } \eqref{eq: section 2 Helm var formul} \textmd{ in } B_R \}.
\end{equation}
Clearly, $\mathcal{D}_R$ is a convex set. For $u\in H^1(B_R)$ we define the following seminorm:
\begin{equation}\label{eq: section 2 seminorm}
[u]_R = \left( \ \ints_{B_R} \Big| \nabla u - ikn u \frac{x}{|x|} \Big|^2 \frac{dx}{1+|x|} \right)^{\frac{1}{2}}.
\end{equation}
We consider the following family of minimization problems
\begin{equation}\label{eq: section 2 minimiz pb}
\inf \{[u]_R:\ u \in \mathcal{D}_R \},
\end{equation}
set
\begin{equation}\label{eq: section 2 d_R}
d_R=\inf \{[u]_R:\ u \in \mathcal{D}_R \}
\end{equation}
and denote by $u_{B_R}$ the minimizer of \eqref{eq: section 2 minimiz pb} (we will prove that $u_{B_R}$ exists and is unique).

\begin{lemma}\label{lemma imaginary part}
Let $u$ be a solution of
\begin{equation}\label{eq: section 2 helm omogenea}
\Delta u + k^2n(x)^2u=0\quad \textmd{in } B_R.
\end{equation}
Then
\begin{equation}\label{eq: section 2 imag = 0}
\IM \ints_{\pa B_\rho} u_r \bar{u} d\sigma = 0.
\end{equation}
for every $0 \leq \rho \leq R$.
\end{lemma}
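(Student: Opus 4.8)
The plan is to obtain \eqref{eq: section 2 imag = 0} from Green's first identity on the smaller ball $B_\rho$, exploiting that $n$ is real-valued. First I would multiply \eqref{eq: section 2 helm omogenea} by $\bar u$ and integrate over $B_\rho$; since the outward unit normal on $\pa B_\rho$ is the radial direction, integration by parts turns the bulk term $\ints_{B_\rho}(\Delta u)\bar u\,dx = -k^2\ints_{B_\rho} n(x)^2|u|^2\,dx$ into a boundary contribution plus a Dirichlet energy, yielding
\[
\ints_{\pa B_\rho} u_r \bar u \, d\sig = \ints_{B_\rho} |\nabla u|^2 \, dx - k^2 \ints_{B_\rho} n(x)^2 |u|^2 \, dx .
\]
Both integrals on the right-hand side are real: the first is manifestly nonnegative, and the second is real because $n$ is real-valued and $|u|^2\geq 0$. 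Taking imaginary parts of both sides then gives $\IM \ints_{\pa B_\rho} u_r \bar u\,d\sig = 0$ immediately.

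The step that requires care is the justification of the integration by parts, because $u$ is only assumed to satisfy the weak formulation \eqref{eq: section 2 Helm var formul} and a priori lies only in $H^1(B_R)$, for which the radial trace $u_r$ on $\pa B_\rho$ is not defined. For $\rho < R$ I would invoke interior elliptic regularity: since $f=0$ and $n\in L^\infty$, the right-hand side of $\Delta u = -k^2 n(x)^2 u$ belongs to $L^2_{loc}(B_R)$, so $u \in H^2_{loc}(B_R)$. In particular $u \in H^2(B_\rho)$ for every $\rho<R$, the trace $u_r$ is well defined in $L^2(\pa B_\rho)$, and the weak formulation \eqref{eq: section 2 Helm var formul} coincides with the classical one on $B_\rho$, so the computation above applies verbatim.

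For the endpoint $\rho = R$, where only $u\in H^1(B_R)$ is available, I would conclude by a limiting argument: the identity $\IM \ints_{\pa B_\rho} u_r \bar u\,d\sig = 0$ holds for all $\rho < R$, and one passes to the limit $\rho \to R^-$ with the boundary integral interpreted in the appropriate trace sense. The only genuine obstacle is this regularity bookkeeping near the boundary; the algebraic heart of the lemma, namely that the Dirichlet energy and the mass term $k^2\ints_{B_\rho} n(x)^2|u|^2\,dx$ are both real, is an immediate consequence of $n$ being real-valued.
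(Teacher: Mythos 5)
Your proof is correct and is essentially the paper's own argument: the paper proves the lemma by multiplying the equation by $\bar{u}\chi_{B_\rho}$, integrating by parts, and taking the imaginary part, which is exactly your Green's identity computation. Your additional care about interior elliptic regularity (so that the trace $u_r$ on $\pa B_\rho$ is well defined for a weak $H^1$ solution) is a detail the paper leaves implicit, not a different route.
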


\begin{proof}
This is a well-known lemma. It can be proved by multiplying \eqref{eq: section 2 helm omogenea} by $\bar{u} \chi_{B_\rho}$, integrating by parts and taking the imaginary part of the equation.
\end{proof}

The following lemma is crucial to prove $H^1$ convergence of the approximating solution.

\begin{lemma}\label{lemma seminorm norm}
Let $u$ be a solution of \eqref{eq: section 2 helm omogenea} and assume that
\begin{equation}\label{hp n 1}
\big\| 1 - \frac{n_\sharp}{n} \big\|_{L^\infty(\RR^d)} < 1,
\end{equation}
where
\begin{equation}\label{n sharp}
n_\sharp(r) = \frac{1}{|\partial B_r|} \int_{\partial B_r} u d\sigma,
\end{equation}
for every $0 \leq r \leq R$. Then we have
\begin{equation}\label{eq: L2 bounded by seminorm}
\int_{B_R} \left( |\nabla u|^2 + k^2n^2 |u|^2 \right) \frac{dx}{1+|x|} \leq \Big( 1-\big\| 1 - \frac{n_\sharp}{n} \big\|_{L^\infty(\RR^d)} \Big)^{-1} [u]_R^2  .
\end{equation}
\end{lemma}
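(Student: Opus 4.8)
The plan is to expand the integrand of $[u]_R^2$ and isolate the cross term that obstructs the bound. Writing $a=\nabla u$ and $b=ikn u\,\frac{x}{|x|}$ and using $|a-b|^2=|a|^2+|b|^2-2\RE(a\cdot\bar b)$ together with $\nabla u\cdot\frac{x}{|x|}=u_r$, a short computation gives the pointwise identity
\begin{equation*}
\Big|\nabla u-ikn u\,\frac{x}{|x|}\Big|^2=|\nabla u|^2+k^2n^2|u|^2-2kn\,\IM(\bar u\,u_r).
\end{equation*}
Integrating against $\frac{dx}{1+|x|}$ over $B_R$ and writing $I$ for the left-hand side of \eqref{eq: L2 bounded by seminorm}, I would recast this as
\begin{equation*}
I=[u]_R^2+2k\ints_{B_R} n\,\IM(\bar u\,u_r)\,\frac{dx}{1+|x|}.
\end{equation*}
It therefore suffices to bound the last integral, in absolute value, by $\big\|1-\frac{n_\sharp}{n}\big\|_{L^\infty}\,I$ and then absorb.

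For the cross term I would pass to polar coordinates, $\ints_{B_R}g\,dx=\ints_0^R\big(\ints_{\pa B_\rho}g\,d\sig\big)\,d\rho$, so that it becomes $2k\ints_0^R\frac{1}{1+\rho}\big(\ints_{\pa B_\rho}n\,\IM(\bar u\,u_r)\,d\sig\big)\,d\rho$. Here lies the crucial step: since $n_\sharp$ is the spherical mean of $n$, it is constant on each sphere $\pa B_\rho$, so splitting $n=n_\sharp+(n-n_\sharp)$ and invoking Lemma \ref{lemma imaginary part} in the form $\ints_{\pa B_\rho}\IM(\bar u\,u_r)\,d\sig=0$, the $n_\sharp$-contribution vanishes for every $\rho$. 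Returning to the volume integral, the cross term equals $2k\ints_{B_R}(n-n_\sharp)\,\IM(\bar u\,u_r)\,\frac{dx}{1+|x|}$.

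To finish I would estimate pointwise: $|\IM(\bar u\,u_r)|\le|u|\,|u_r|\le|u|\,|\nabla u|$, and $|n-n_\sharp|=n\,\big|1-\frac{n_\sharp}{n}\big|\le\big\|1-\frac{n_\sharp}{n}\big\|_{L^\infty}\,n$ since $n>0$; Young's inequality $2kn|u|\,|\nabla u|\le k^2n^2|u|^2+|\nabla u|^2$ then yields
\begin{equation*}
\Big|2k\ints_{B_R}(n-n_\sharp)\,\IM(\bar u\,u_r)\,\frac{dx}{1+|x|}\Big|\le\big\|1-\tfrac{n_\sharp}{n}\big\|_{L^\infty}\,I.
\end{equation*}
Combining with the identity for $I$ gives $I\le[u]_R^2+\big\|1-\frac{n_\sharp}{n}\big\|_{L^\infty}I$, and since \eqref{hp n 1} ensures the coefficient is $<1$, the factor $1-\big\|1-\frac{n_\sharp}{n}\big\|_{L^\infty}$ is positive and may be moved to the left, producing exactly \eqref{eq: L2 bounded by seminorm}.

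The one genuinely delicate point is the cancellation in the second paragraph: the cross term is a priori of the same size as $I$ and cannot be absorbed on its own, and the role of $n_\sharp$ is precisely to peel off the radial part of $n$ on which Lemma \ref{lemma imaginary part} forces the spherical integral to vanish. Hypothesis \eqref{hp n 1} is then read as a quantitative smallness condition on the angular oscillation of $n$ about its spherical mean, and it is exactly what makes the leftover remainder absorbable; the constant $\big(1-\|1-n_\sharp/n\|_{L^\infty}\big)^{-1}$ is the resulting absorption factor.
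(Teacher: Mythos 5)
Your proof is correct and follows essentially the same route as the paper's: expand $[u]_R^2$ to isolate the cross term $2k\int_{B_R} n\,\IM(u_r\bar u)\,\frac{dx}{1+|x|}$, split $n=n_\sharp+(n-n_\sharp)$ so that the radial part vanishes by Lemma \ref{lemma imaginary part} via the coarea formula, bound the remainder using $|n-n_\sharp|\le\|1-\tfrac{n_\sharp}{n}\|_{L^\infty}\,n$ and Young's inequality, and absorb. If anything, your final absorption step (bounding the cross term by $\|1-\tfrac{n_\sharp}{n}\|_{L^\infty}$ times the full weighted energy $I$ and moving it to the left) is slightly cleaner than the paper's intermediate claim \eqref{eq: proof 1}, and it also correctly reads $n_\sharp$ as the spherical mean of $n$ (the definition \eqref{n sharp} contains a typo, with $u$ in place of $n$).
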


\begin{proof}
We assert that
\begin{equation}\label{eq: proof 1}
\Big{|} 2 \IM \int_{B_R} k n u_r \bar{u} \frac{dx}{1+|x|}  \Big{|} \leq \big\| 1 - \frac{n_\sharp}{n} \big\|_{L^\infty(\RR^d)} [u]_R^2.
\end{equation}
Indeed, we write
\begin{equation} \label{eq: proof 2}
\IM \int_{B_R} k n u_r \bar{u} \frac{dx}{1+|x|} = \IM \int_{B_R} k (n-n_\sharp) u_r \bar{u} \frac{dx}{1+|x|} + \IM \int_{B_R} k n_\sharp u_r \bar{u} \frac{dx}{1+|x|};
\end{equation}
and notice that, since $n_\sharp$ is a real-valued radial function, coarea formula and \eqref{eq: section 2 imag = 0} yield
\begin{equation}\label{eq: proof 3}
\IM \int_{B_R} k n_\sharp u_r \bar{u} \frac{dx}{1+|x|} = k \int_0^R \frac{n_\sharp(\rho)}{1+\rho} \left( \IM\int_{\partial B_\rho} u_r \bar{u} d\sigma \right) d\rho = 0.
\end{equation}
Since
\begin{equation}\label{eq: proof 4}
\Big{|} \int_{B_R} 2k (n-n_\sharp) u_r \bar{u} \frac{dx}{1+|x|} \Big{|} \leq  \big\| 1 - \frac{n_\sharp}{n} \big\|_{L^\infty(\RR^d)}  \int_{B_R} \big( |u_r|^2 + k^2 n(x)^2 |u|^2 \big) \frac{dx}{1+|x|},
\end{equation}
from \eqref{eq: proof 2}-\eqref{eq: proof 4} we easily get \eqref{eq: proof 1}. From \eqref{eq: proof 1} and being
\begin{equation*}
[u]_R^2 = \ints_{B_R} \Big( |\nabla u|^2 + k^2n^2 |u|^2 \Big) \frac{dx}{1+|x|} - 2 \IM \int_{B_R} k n u_r \bar{u} \frac{dx}{1+|x|},
\end{equation*}
we obtain \eqref{eq: L2 bounded by seminorm}.
\end{proof}

In the following proposition, we prove that the minimizer of \eqref{eq: section 2 minimiz pb} exists, is unique and it is a solution of the Helmholtz equation in $B_R$.

\begin{proposition} \label{prop exist uniq minimizer}
Let $R>0$ be fixed and assume that $n$ satisfies \eqref{hp n 0} and \eqref{hp n 1}. Then, there exists a unique $u_{B_R} \in \mathcal{D}_R$ such that $[u_{B_R}]_R=d_R$, where $d_R$ is given by \eqref{eq: section 2 d_R}.
\end{proposition}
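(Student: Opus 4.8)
The plan is to recast the constrained minimization \eqref{eq: section 2 minimiz pb} as an orthogonal projection in a suitable Hilbert space and to read off existence and uniqueness from the projection theorem, the coercivity being supplied by Lemma \ref{lemma seminorm norm}. First I would record that the admissible set is nonempty: the global solution $U$ of \eqref{eq: introd Helm Perth Vega} restricts to $u_0 := U|_{B_R} \in \mathcal{D}_R$. Writing $V$ for the linear space of weak solutions of the homogeneous equation \eqref{eq: section 2 helm omogenea}, I would check that $V$ is a closed subspace of $H^1(B_R)$ — it is the intersection over $\vp \in C_0^1(B_R)$ of the kernels of the bounded functionals $w \mapsto -\ints_{B_R}\nabla w\cdot\nabla\overline{\vp}\,dx + k^2\ints_{B_R} n^2 w\overline{\vp}\,dx$ attached to \eqref{eq: section 2 Helm var formul} — and that $\mathcal{D}_R = u_0 + V$ is therefore a closed affine subspace, in particular convex.

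Next I would introduce the Hermitian, nonnegative sesquilinear form
\[
a(u,v) = \ints_{B_R}\Big(\nabla u - ikn u\tfrac{x}{|x|}\Big)\cdot\overline{\Big(\nabla v - ikn v\tfrac{x}{|x|}\Big)}\frac{dx}{1+|x|},
\]
so that $[u]_R^2 = a(u,u)$. Since every $w \in V$ solves \eqref{eq: section 2 helm omogenea}, Lemma \ref{lemma seminorm norm} applies under hypothesis \eqref{hp n 1} and gives
\[
\ints_{B_R}\big(|\nabla w|^2 + k^2 n^2 |w|^2\big)\frac{dx}{1+|x|} \le \Big(1-\big\|1-\tfrac{n_\sharp}{n}\big\|_{L^\infty(\RR^d)}\Big)^{-1}[w]_R^2 .
\]
Because $B_R$ is bounded and $0 < n_* \le n \le \|n\|_{L^\infty}$ by \eqref{hp n 0}, the weight $\frac{1}{1+|x|}$ and the coefficient $k^2n^2$ are bounded above and below by positive constants on $B_R$, so the left-hand side is comparable to $\|w\|_{H^1(B_R)}^2$. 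Together with the trivial bound $[w]_R^2 \le C\|w\|_{H^1(B_R)}^2$, this shows that $a$ restricts to an inner product on $V$ whose norm is equivalent to $\|\cdot\|_{H^1(B_R)}$; hence $(V,a)$ is a Hilbert space.

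With this in hand I would write $u = u_0 + w$ with $w \in V$ and expand
\[
[u]_R^2 = a(u_0,u_0) + 2\RE a(w,u_0) + a(w,w).
\]
The Cauchy--Schwarz inequality for the nonnegative form $a$ gives $|a(w,u_0)| \le [w]_R[u_0]_R$, so $w \mapsto a(w,u_0)$ is bounded on $(V,a)$; minimizing the strictly convex, coercive quadratic $w \mapsto a(w,w) + 2\RE a(w,u_0)$ over $V$ then has a unique solution $w^\ast$ by the Riesz representation (projection) theorem, characterized by $a(u_0 + w^\ast, v) = 0$ for all $v \in V$. Setting $u_{B_R} := u_0 + w^\ast$ produces the unique minimizer with $[u_{B_R}]_R = d_R$ as in \eqref{eq: section 2 d_R}.

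The main obstacle is coercivity. On all of $H^1(B_R)$ the quantity $[\cdot]_R$ is merely a seminorm with nontrivial kernel, so neither existence nor uniqueness is automatic from the direct method alone. What saves the argument is that along the affine constraint $\mathcal{D}_R$ every admissible variation lies in the homogeneous solution space $V$, and there Lemma \ref{lemma seminorm norm} upgrades $[\cdot]_R$ into a norm equivalent to $\|\cdot\|_{H^1(B_R)}$. Verifying this equivalence — and noticing that hypothesis \eqref{hp n 1} is precisely the condition ensuring the constant in Lemma \ref{lemma seminorm norm} is finite — is the crux; once it is established, the remainder is standard Hilbert-space variational machinery.
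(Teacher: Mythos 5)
Your proof is correct, and it takes a genuinely different route from the paper's. The paper proceeds by the direct method: it takes a minimizing sequence $\{u_m\}\subset\mathcal{D}_R$, applies the parallelogram identity together with the convexity of $\mathcal{D}_R$ to show $[u_\ell-u_m]_R^2\le 4/m$, then invokes Lemma \ref{lemma seminorm norm} (exactly as you do, since $u_\ell-u_m$ solves the homogeneous equation) to conclude the sequence is Cauchy in $H^1(B_R)$; the strong limit lies in $\mathcal{D}_R$ and is the minimizer, and uniqueness follows from a second application of the parallelogram identity plus Lemma \ref{lemma seminorm norm}. In effect, the paper re-derives the Hilbert-space projection theorem in situ, while you identify the abstract structure --- $\mathcal{D}_R=u_0+V$ is a closed affine subspace, $a(\cdot,\cdot)$ restricted to $V$ is an inner product with norm equivalent to $\|\cdot\|_{H^1(B_R)}$ --- and then quote Riesz/projection as a black box. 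Both arguments hinge on the same two pillars: convexity (affineness) of the constraint set, and the coercivity on homogeneous solutions supplied by Lemma \ref{lemma seminorm norm} under \eqref{hp n 1}. What your version buys: it makes explicit that $\mathcal{D}_R$ is nonempty (via $U|_{B_R}$, which the paper only uses implicitly and records later in Lemma \ref{lemma dR increasing}), it isolates precisely where \eqref{hp n 1} enters (positive-definiteness and completeness of $(V,a)$), and it yields the variational characterization $a(u_{B_R},v)=0$ for all $v\in V$ at no extra cost. What the paper's version buys: it is elementary and self-contained, and the same parallelogram-plus-Lemma-\ref{lemma seminorm norm} computation is reused almost verbatim in the proof of Theorem \ref{thm convergence} to compare minimizers on nested balls, so the hands-on argument does double duty there in a way the abstract projection statement would not.
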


\begin{proof}
Let $\{u_m\}_{m\in\NN} \subset \mathcal{D}_R$ be a minimizing sequence for \eqref{eq: section 2 minimiz pb} such that
\begin{equation}\label{eq u_m minim seq}
[u_m]_R^2 \leq d_R^2 + \frac{1}{m},
\end{equation}
for any $m\in\NN$. The parallelogram identity yields
\begin{equation*}
\Big[ \frac{u_\ell-u_m}{2} \Big]_R^2 + \Big[ \frac{u_\ell+u_m}{2} \Big]_R^2 = \frac{[u_\ell]_R^2+[u_m]_R^2}{2},
\end{equation*}
for every $\ell \in \NN$. Since $(u_\ell+u_m)/2 \in \mathcal{D}_R$, then
\begin{equation*}
\Big[ \frac{u_\ell-u_m}{2} \Big]_R^2 \leq \frac{[u_\ell]_R^2+[u_m]_R^2}{2} - d_R^2,
\end{equation*}
and from \eqref{eq u_m minim seq} we obtain
\begin{equation*}
[u_\ell-u_m]_R^2 \leq \frac{4}{m},
\end{equation*}
for any $\ell>m$. From \eqref{eq: L2 bounded by seminorm} and \eqref{hp n 0}, we have that $\{u_m\}_{m\in\NN}$ is a Cauchy sequence in $H^1(B_R)$ and hence there exists $u_{B_R} \in H^1(B_R)$ such that $u_m \to u_{B_R}$ in $H^1(B_R)$. From such strong convergence we get that $u_{B_R}$ satisfies \eqref{eq: section 2 Helm var formul} and then $u_{B_R}\in \mathcal{D}_R$.

Now we prove the uniqueness by contradiction. Let us assume that there are two minimizers $u$ and $v$. The parallelogram identity yields
$[u-v]_R^2\leq 0$ which is a contradiction on account of Lemma \ref{lemma seminorm norm}.
\end{proof}

\begin{lemma}\label{lemma dR increasing}
Let $U$ be the solution of \eqref{eq: introd Helm Perth Vega} satisfying \eqref{eq: intro rad con Perth Vega II} and set
\begin{equation*}
[U]_\infty = \Big(\: \ints_{\RR^d} \Big| \nabla u - ikn u \frac{x}{|x|} \Big|^2  \frac{dx}{1+|x|} \Big)^{\frac{1}{2}}.
\end{equation*}
The sequence $\{d_R\}_{\{R>0\}}$ is nondecreasing and $d_R \leq [U]_\infty$, for every $R>0$.
\end{lemma}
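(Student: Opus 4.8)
The plan is to exploit two elementary facts: that the integrand defining the seminorm $[\,\cdot\,]_R$ in \eqref{eq: section 2 seminorm} is pointwise nonnegative, so that the seminorm is monotone with respect to the integration domain, and that the restriction of a variational solution on a larger ball to a smaller concentric ball is again a variational solution there.

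First I would establish the bound $d_R \leq [U]_\infty$. Since $U$ solves \eqref{eq: introd Helm Perth Vega} in all of $\RR^d$, its restriction to $B_R$ satisfies the variational formulation \eqref{eq: section 2 Helm var formul}, and hence $U|_{B_R} \in \mathcal{D}_R$. By the very definition of $d_R$ in \eqref{eq: section 2 d_R} we then have $d_R \leq [U]_R$. Because the integrand $\big| \nabla U - ikn U \frac{x}{|x|} \big|^2 (1+|x|)^{-1}$ is nonnegative and $B_R \subseteq \RR^d$, integrating over $B_R$ rather than over $\RR^d$ can only decrease the value, so $[U]_R \leq [U]_\infty$; combining the two inequalities yields $d_R \leq [U]_\infty$. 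Note that $[U]_\infty < +\infty$ precisely because $U$ obeys the radiation condition \eqref{eq: intro rad con Perth Vega II}.

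For the monotonicity, fix $0 < R_1 < R_2$ and let $u_{B_{R_2}}$ be the minimizer on $B_{R_2}$ provided by Proposition \ref{prop exist uniq minimizer}. Any test function $\varphi \in C_0^1(B_{R_1})$, extended by zero, belongs to $C_0^1(B_{R_2})$, and the integrals appearing in \eqref{eq: section 2 Helm var formul} over $B_{R_1}$ and over $B_{R_2}$ coincide for such $\varphi$ since it is supported in $B_{R_1}$; hence the restriction $u_{B_{R_2}}|_{B_{R_1}}$ satisfies \eqref{eq: section 2 Helm var formul} in $B_{R_1}$ and lies in $\mathcal{D}_{R_1}$. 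Consequently $d_{R_1} \leq [u_{B_{R_2}}]_{R_1}$. Using again the nonnegativity of the integrand together with $B_{R_1} \subseteq B_{R_2}$, we obtain $[u_{B_{R_2}}]_{R_1} \leq [u_{B_{R_2}}]_{R_2} = d_{R_2}$, and therefore $d_{R_1} \leq d_{R_2}$.

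I do not expect a genuine obstacle here: the argument is in essence a comparison for an infimum combined with monotonicity of the integration domain. The one point deserving care is the verification that restricting a variational solution from $B_{R_2}$ to $B_{R_1}$ produces a variational solution in $B_{R_1}$, which rests on the inclusion $C_0^1(B_{R_1}) \subset C_0^1(B_{R_2})$ and requires no regularity beyond $H^1$. As an alternative that avoids invoking the existence of the minimizer, one may instead pick, for each $\varepsilon > 0$, a function $u \in \mathcal{D}_{R_2}$ with $[u]_{R_2} \leq d_{R_2} + \varepsilon$, restrict it to $B_{R_1}$, deduce $d_{R_1} \leq [u]_{R_1} \leq [u]_{R_2} \leq d_{R_2} + \varepsilon$, and let $\varepsilon \to 0$.
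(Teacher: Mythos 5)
Your proof is correct and follows essentially the same route as the paper: restrict the minimizer on the larger ball (respectively, the outgoing solution $U$) to the smaller ball, note it belongs to $\mathcal{D}_R$ there, and use monotonicity of the seminorm in the integration domain. Your added justification that restrictions of variational solutions remain variational solutions, and the alternative with $\varepsilon$-approximate minimizers, are fine elaborations of steps the paper leaves implicit.
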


\begin{proof}
Let $R$ be fixed and consider $\rho>0$. Since $u_{B_{R+\rho}} \big|_{B_R} \in \DD_R$, we have
\begin{equation*}
d_R \leq [u_{B_{R+\rho}}]_R \leq d_{R+\rho};
\end{equation*}
hence, $d_R$ is non-decreasing in $R$. Since $U$ is a solution of
the Helmholtz equation in $\RR^d$, then $U \big|_{B_R} \in
\DD_R$, and thus $d_R\leq [U]_R \leq [U]_\infty$, for any $R>0$.
\end{proof}

We are ready to prove our main theorem.

\begin{theorem} \label{thm convergence}
Let $R_0>0$ be fixed and assume that $n$ satisfies \eqref{hp n 0} and \eqref{hp n 1}. Then
\begin{equation*}
\lim_{R\to +\infty} \| u_{B_R} - U \|_{H^1(B_{R_0})} = 0.
\end{equation*}
\end{theorem}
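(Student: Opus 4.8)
The plan is to show that the family $\{u_{B_R}\}$ is Cauchy in $H^1(B_{R_0})$ as $R\to+\infty$, to identify its limit with the outgoing solution $U$, and thereby obtain the stated convergence. The guiding observation is that if $R'>R$, then both $u_{B_R}$ and the restriction $u_{B_{R'}}\big|_{B_R}$ belong to the convex set $\DD_R$, and their difference solves the \emph{homogeneous} equation \eqref{eq: section 2 helm omogenea} in $B_R$ (both satisfy \eqref{eq: section 2 Helm var formul} with the same $f$). This is precisely what lets me invoke the coercivity estimate of Lemma \ref{lemma seminorm norm}, whose hypotheses on $n$ are exactly \eqref{hp n 0}--\eqref{hp n 1}.

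First I would establish a Cauchy estimate for the seminorm. Since $\DD_R$ is convex, $(u_{B_R}+u_{B_{R'}})/2\in\DD_R$ and hence $[(u_{B_R}+u_{B_{R'}})/2]_R\geq d_R$. Inserting this into the parallelogram identity for $[\cdot]_R$ (which arises from a weighted Hermitian form), and using $[u_{B_R}]_R=d_R$ together with the monotonicity $[u_{B_{R'}}]_R\leq [u_{B_{R'}}]_{R'}=d_{R'}$ --- valid because the integrand in \eqref{eq: section 2 seminorm} is nonnegative and $B_R\subset B_{R'}$ --- I would obtain
\[
[u_{B_R}-u_{B_{R'}}]_R^2 \leq 2\big(d_{R'}^2-d_R^2\big).
\]
By Lemma \ref{lemma dR increasing} the sequence $\{d_R\}$ is nondecreasing and bounded above by $[U]_\infty<+\infty$, so it converges and is Cauchy; thus the right-hand side tends to $0$.

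Next I would transfer this to $H^1$. Applying Lemma \ref{lemma seminorm norm} to the homogeneous solution $u_{B_R}-u_{B_{R'}}$ gives
\[
\ints_{B_R}\big(|\nabla(u_{B_R}-u_{B_{R'}})|^2+k^2n^2|u_{B_R}-u_{B_{R'}}|^2\big)\frac{dx}{1+|x|}\leq C\,[u_{B_R}-u_{B_{R'}}]_R^2,
\]
with $C=(1-\|1-n_\sharp/n\|_{L^\infty})^{-1}$ independent of $R$. Restricting the integral to $B_{R_0}$, bounding the weight below by $(1+R_0)^{-1}$ and using $n\geq n_*>0$ from \eqref{hp n 0}, I recover control of the unweighted norm $\|u_{B_R}-u_{B_{R'}}\|_{H^1(B_{R_0})}^2$ by a constant (depending on $R_0,k,n_*,C$) times $d_{R'}^2-d_R^2$. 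Hence $\{u_{B_R}\}$ is Cauchy in $H^1(B_{R_0})$ for every fixed $R_0$, and since these limits are consistent as $R_0$ varies they define a single $u_\infty\in H^1_{loc}(\RR^d)$ with $u_{B_R}\to u_\infty$ in $H^1(B_{R_0})$.

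Finally I would identify $u_\infty$ with $U$. Strong $H^1_{loc}$ convergence passes to the limit in \eqref{eq: section 2 Helm var formul} for each fixed test function, so $u_\infty$ is a global weak solution of \eqref{eq: introd Helm Perth Vega}. For the radiation condition, continuity of $[\cdot]_{R_0}$ under $H^1(B_{R_0})$ convergence gives $[u_\infty]_{R_0}=\lim_R [u_{B_R}]_{R_0}\leq \lim_R d_R\leq [U]_\infty$ for every $R_0$, and letting $R_0\to+\infty$ by monotone convergence yields $[u_\infty]_\infty\leq [U]_\infty<+\infty$, i.e. \eqref{eq: intro rad con Perth Vega II} holds. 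By the assumed uniqueness of the outgoing solution, $u_\infty=U$, whence $u_{B_R}\to U$ in $H^1(B_{R_0})$. The main obstacle is the passage from seminorm control to genuine $H^1$ control: it rests entirely on the coercivity Lemma \ref{lemma seminorm norm}, and hence on the near-radiality assumption \eqref{hp n 1}; the remaining delicate point is verifying that the limit inherits the radiation condition, which I handle by lower semicontinuity of the seminorm together with the uniform bound $d_R\leq[U]_\infty$.
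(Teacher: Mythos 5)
Your proposal is correct and follows essentially the same route as the paper: the parallelogram identity plus convexity of $\DD_R$ and monotonicity of $d_R$ give $[u_{B_R}-u_{B_{R'}}]_R^2\leq 2(d_{R'}^2-d_R^2)$, Lemma \ref{lemma seminorm norm} upgrades this to a Cauchy property in $H^1(B_{R_0})$, and Lemma \ref{lemma dR increasing} plus uniqueness of the outgoing solution identify the limit with $U$. In fact you are somewhat more careful than the paper on two points it leaves implicit --- tracking the coercivity constant and the weight factor $(1+R_0)^{-1}$, and justifying via continuity of the seminorm and the uniform bound $d_R\leq[U]_\infty$ that the limit inherits the radiation condition \eqref{eq: intro rad con Perth Vega II}.
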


\begin{proof}
Let $\rho> R > R_0$; since
$u_{B_\rho}\big|_{B_R} \in \DD_R$, we have
\begin{equation*}
\Big[ \frac{u_{B_R}-u_{B_\rho}}{2} \Big]_R^2 + \Big[ \frac{u_{B_R}+u_{B_\rho}}{2}
\Big]_R^2 = \frac{[u_{B_R}]_R^2 + [u_{B_\rho}]_R^2}{2}\ .
\end{equation*}
Since $\DD_R$ is a convex set,
\begin{equation*}
\Big[ \frac{u_{B_R}+u_{B_\rho}}{2} \Big]_{R} \geq d_R,
\end{equation*}
and, being $[u_{B_{\rho}}]_R \leq [u_{B_{\rho}}]_\rho=d_\rho$, we have
\begin{equation*}
[u_{B_R}-u_{B_\rho}]_{R}^2 \leq 2 (d_\rho^2 - d_R^2).
\end{equation*}
Lemma \ref{lemma seminorm norm} and \eqref{hp n 0} yield
\begin{equation*}
\int_{B_R} \left( |\nabla (u_{B_R}-u_{B_\rho})|^2 + k^2 n_*^2 |u_{B_R}-u_{B_\rho}|^2 \right) \frac{dx}{1+|x|}  \leq 2 (d_\rho^2 - d_R^2),
\end{equation*}
and, since $R_0<R$,
\begin{equation*}
 \| u_{B_R}-u_{B_\rho} \|_{H^1(B_{R_0})}^2 \leq \frac{2}{\min(1,k^2n_*^2)} (d_\rho^2 - d_R^2).
\end{equation*}
We notice that Lemma \ref{lemma dR increasing} implies that $d_R$ converges to a finite number as $R\to +\infty$ and hence $\{u_{B_R}\}_{R>R_0}$ is a Cauchy sequence in $H^1(B_{R_0})$. Thus $u_{B_R}$ converges strongly in $H^1(B_{R_0})$ and the limit is a solution of \eqref{eq: section 2 Helm var formul} in $B_{R_0}$. Since $R_0$ is arbitrary, we can define a solution $\tilde{u}$ of \eqref{eq: section 2 Helm var formul} in $\RR^d$ which satisfies \eqref{eq: intro rad con Perth Vega II}. From the uniqueness of the solution, we get that $\tilde{u}=U$ and the proof is complete.
\end{proof}

In the rest of this section, we show how to generalize Theorem \ref{thm convergence} to problems in exterior domains. The generalization is straightforward but it needs some slight modifications as we are going to show in the following.

Let $D \subset \RR^d$ be a bounded piecewise regular domain containing the origin and consider the following problems:
\begin{equation}\label{Probl exterior domain}
\begin{cases}
\Delta u + k^2n(x)^2 u = f, & \textmd{in } \RR^d \setminus \overline{D},\\
ju+(1-j)u_\nu = g, & \textmd{on } \partial D, \\
\displaystyle \int_{\RR^d \setminus D} \Big{|} \nabla u - ikn u \frac{x}{|x|}\Big{|}^2 \frac{dx}{1+|x|} < + \infty, &
\end{cases}
\end{equation}
for $j=0,1$, $g \in L^2 (\partial D)$ and where $\nu$ is the outward normal to $\partial D$. We denote by $U_{D}^j$ the solution of problem \eqref{Probl exterior domain} (as before, we shall not specify the assumptions on $n$ to get existence and uniqueness of the solution).

For $R> \textmd{diam}\: D$, we consider the problems
\begin{multline}\label{Pb minimize exterior domain}
\inf \Bigg\{\int_{B_R \setminus D} \big{|} \nabla u - ikn u x/|x| \big{|}^2 \frac{dx}{1+|x|}:\\ \Delta u + k^2n(x)^2 u = f,  \textmd{ in } B_R \setminus \overline{D}, \ ju+(1-j)u_\nu = g,  \textmd{ on } \partial D  \Bigg\},  \end{multline}
$j=0,1$.
Let $n_{\sharp,D}$ be given by
\begin{equation}\label{n diesis D}
n_{\sharp,D} (r) = \begin{cases} \displaystyle
\frac{1}{|\partial B_r \cap \bar{D}^c|} \int_{\partial B_r \cap \bar{D}^c} n d\sigma,  & \textmd{if } \partial B_r \cap \bar{D}^c \neq \emptyset, \\
0 & \textmd{if } \partial B_r \cap \bar{D}^c = \emptyset,\\
\end{cases}
\end{equation}
$r \geq 0$, where $\bar{D}^c = \RR^d \setminus \bar{D}$. We have the following result.

\begin{theorem}
Let $R> \textmd{diam}\: D$ be fixed and assume that $n$ satisfies \eqref{hp n 0} and \eqref{hp n 1}, where $n_\sharp$ is replaced by $n_{\sharp,D}$. Then there exists a unique minimizer $u_{D,R}^j$ of \eqref{Pb minimize exterior domain}. Furthermore $u_{D,R}^j$ satisfies $\Delta u + k^2n(x)^2 u = f$ in $B_R \setminus \bar{D}$ and we have
\begin{equation*}
\lim_{R\to +\infty} \| u_{D,R}^j - U_D^j \|_{H^1(B_{R_0} \setminus \bar{D})} = 0,
\end{equation*}
for any $R_0> \textmd{diam } D$.
\end{theorem}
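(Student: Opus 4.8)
The plan is to transplant the entire argument of Lemmas \ref{lemma imaginary part}--\ref{lemma dR increasing} and Theorem \ref{thm convergence} to the annular geometry $B_R\setminus\overline{D}$; the only genuinely new point is the treatment of the inner boundary $\partial D$, and the remark that makes it painless is that existence, uniqueness and convergence are all driven by \emph{differences} of admissible functions. Since every competitor in \eqref{Pb minimize exterior domain} carries the same data $g$ on $\partial D$, any difference $w=u-v$ of two competitors solves the homogeneous equation $\Delta w+k^2n^2w=0$ in $B_R\setminus\overline{D}$ together with the \emph{homogeneous} boundary condition $jw+(1-j)w_\nu=0$ on $\partial D$. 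Throughout, $[\,\cdot\,]_R$ denotes the seminorm \eqref{eq: section 2 seminorm} taken over $B_R\setminus D$, and $\delta_R$ the infimum of $[u]_R$ over the competitors of \eqref{Pb minimize exterior domain}.

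The first step, and the one I expect to be the crux, is the annular analogue of Lemma \ref{lemma imaginary part}: for a function $w$ as above,
\[
\IM\int_{\partial B_\rho\cap\bar{D}^c} w_r\bar w\,d\sigma=0,\qquad 0<\rho\le R.
\]
To see it, multiply $\Delta w+k^2n^2w=0$ by $\bar w\,\chi_{B_\rho\setminus D}$ and integrate by parts. Besides the real volume terms $|\nabla w|^2$ and $k^2n^2|w|^2$, the boundary of $B_\rho\setminus D$ contributes an integral over $\partial B_\rho\cap\bar{D}^c$ (outward normal radial, giving $w_r\bar w$) and one over $\partial D\cap B_\rho$. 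Taking imaginary parts annihilates the volume terms, while the contribution $\IM\int_{\partial D\cap B_\rho} w_\nu\bar w\,d\sigma$ vanishes by the homogeneous boundary condition: for $j=1$ one has $w=0$ on $\partial D$, for $j=0$ one has $w_\nu=0$ there. It is precisely here that the Dirichlet and Neumann cases are dispatched at once, the homogeneity of the data --- essential in one case, natural in the other --- being exactly what forces the $\partial D$ term to drop.

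With this identity the annular version of Lemma \ref{lemma seminorm norm} follows verbatim, with $n_\sharp$ replaced by the radial average $n_{\sharp,D}$ of \eqref{n diesis D}: in the splitting \eqref{eq: proof 2} the $n_{\sharp,D}$ term is recast by the coarea formula over the slices $\partial B_\rho\cap\bar{D}^c$ and vanishes by the identity above, while the $(n-n_{\sharp,D})$ term is absorbed using hypothesis \eqref{hp n 1}. One thus obtains, for every such difference $w$,
\[
\int_{B_R\setminus D}\big(|\nabla w|^2+k^2n^2|w|^2\big)\frac{dx}{1+|x|}\le\Big(1-\big\|1-\tfrac{n_{\sharp,D}}{n}\big\|_{L^\infty}\Big)^{-1}[w]_R^2 .
\]

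The remainder is a transcription of the ball case. For existence and uniqueness I would repeat Proposition \ref{prop exist uniq minimizer}: the constraint set in \eqref{Pb minimize exterior domain} is convex --- indeed affine, since the difference of any two competitors solves the homogeneous problem --- so the parallelogram identity together with \eqref{eq u_m minim seq} makes any minimizing sequence Cauchy in $[\,\cdot\,]_R$; the displayed seminorm--norm bound, applied to the differences, upgrades this to a Cauchy property in $H^1(B_R\setminus D)$, whose strong limit $u_{D,R}^j$ lies in the constraint set, attains $\delta_R$, and hence solves $\Delta u+k^2n^2u=f$ in $B_R\setminus\overline{D}$; uniqueness again follows from the parallelogram identity. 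Finally, convergence mirrors Theorem \ref{thm convergence}: for $\rho>R$ the restriction $u_{D,\rho}^j\big|_{B_R\setminus D}$ is admissible for the smaller annulus, so $\delta_R$ is nondecreasing in $R$ and bounded above by the finite energy of $U_D^j$ granted by the integrability condition in \eqref{Probl exterior domain}, hence convergent; feeding $\delta_\rho^2-\delta_R^2\to 0$ into the seminorm--norm estimate for $w=u_{D,R}^j-u_{D,\rho}^j$ shows that $\{u_{D,R}^j\}$ is Cauchy in $H^1(B_{R_0}\setminus D)$, and by uniqueness of the exterior problem \eqref{Probl exterior domain} its limit must coincide with $U_D^j$.
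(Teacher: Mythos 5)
Your proposal is correct and takes essentially the same route as the paper: establish the exterior analogue of the imaginary-part identity (the homogeneous condition on $\partial D$, Dirichlet or Neumann, kills the extra boundary term), deduce the seminorm--norm bound with $n_{\sharp,D}$ via the coarea formula over the slices $\partial B_\rho\cap\bar{D}^c$, and then transcribe the existence/uniqueness and convergence arguments word for word. The only cosmetic differences are that the paper treats the radii $R_*<r\le \mathrm{diam}\,D$ and $r>\mathrm{diam}\,D$ separately (invoking the divergence theorem on each connected component of $B_r\setminus\overline{D}$) and formalizes the coarea step by extending $u$ and $\nabla u$ by zero inside $D$, whereas you argue uniformly in $\rho$.
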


\begin{proof}
The proof follows the lines of the one of Theorem \ref{thm convergence}. However, since we are considering an exterior problem, Lemmas \ref{lemma imaginary part}-\ref{lemma seminorm norm} must be adapted to this case.

\noindent Let $u$ be a solution of
\begin{equation*}
\begin{cases}
\Delta u + k^2 n(x)^2 u = 0 , & \textmd{in } B_R \setminus \overline{D},\\
ju+(1-j)u_\nu = 0, & \textmd{on } \partial D.
\end{cases}
\end{equation*}
Let $R_*$ be the radius of the largest ball contained in $D$ centered at the origin. For $R_*<r\leq R$, we consider the domain
\begin{equation*}
\Omega_r=B_r \setminus \overline{D}.
\end{equation*}
If $r>\textmd{diam}\: D$, then it is easy to show that
\begin{equation*}
\IM \int_{\partial B_r} u_r \bar{u} d\sigma = 0.
\end{equation*}
If $R_*<r\leq \textmd{diam}\: D$, then the assumptions on $D$ guarantee that the divergence theorem holds on any connected component of $\Omega_r$. From Helmholtz equation and the homogeneous boundary conditions on $\partial D$, we obtain that
\begin{equation} \label{eq: imag part 0 ext}
\IM \int_{\partial B_r \cap \bar{D}^c} u_r \bar{u} d\sigma = 0,
\end{equation}
for $R_*\leq r \leq R$. Hence, we have proven the analogous of Lemma \ref{lemma imaginary part} for the exterior problem. To prove the analogous of Lemma \ref{lemma seminorm norm} we have to take care of the application of the coarea formula in \eqref{eq: proof 3}. This can be easily done by extending $u$ and $\nabla u$ to zero in $D$. The rest of the argument needed to prove this theorem is completely analogous to what done before for the case $D = \emptyset$.
\end{proof}

\section{Numerical results for a constant index of refraction} \label{section 3}
In this section, we study some convergence properties of the approximating solution given by the minimizer $u_\Omega$ of \eqref{eq: introd minimiz problem PV}. Our main goal in this section is to study the convergence for several choices of the shape of the artificial boundary in the case of constant index of refraction.

We consider a classical model problem in scattering theory for which an exact solution is available.
 In particular, we consider
\begin{equation} \label{eq: sect4 scattering}
\begin{cases}
\Delta u + k^2 u = 0, & \textmd{in } \RR^2 \setminus \overline{B_{\hat{r}}},\\
u = g, & \textmd{on } \pa B_{\hat{r}},\\
\int_{\RR^2 \setminus B_{\hat{r}}} \big{|} \nabla u -ik u x/|x| \big{|}^2 dx < +\infty , &
\end{cases}
\end{equation}
where $B_{\hat{r}}$ is the ball of radius $\hat{r}$ centered at the origin and $g \in L^2 (\partial B_{\hat{r}})$ is a given function.
 We notice that, since $n$ is constant, the radiation condition at infinity that we use in \eqref{eq: sect4 scattering} is equivalent
to the usual Sommerfeld radiation condition.
This can be easily seen by expanding the solution at infinity in terms of negative powers of $|x|$.
We stress that in this case we use a functional without the weight $(1+|x|)^{-1}$ because we expect
a better rate of convergence for the approximate solution. It is straightforward to verify that the results in Section \ref{section 2} hold true also without such weight.

As it is well-known, the exact solution to problem \eqref{eq: sect4 scattering} can be obtained by separation of variables and is given by
\begin{equation*}
u_{exact}(\theta,r)=\frac{1}{\pi}\hat{\sum}^\infty_{\ell=0} \int^{2\pi}_0 \frac{H_\ell^1(kr)}{H_\ell^1(k\hat{r})}\cos [\ell(\theta-\tilde{\theta})] g(\tilde{\theta})d\tilde{\theta},
\end{equation*}
where $(\theta,r)$ denote the usual polar coordinates, $H_\ell^1$ is the Hankel function of the first kind of order $\ell$ and $\hat{\sum}$ means that the term with $\ell=0$ is multiplied by a factor 1/2.

Let $\Omega$ be a bounded domain containing $B_{\hat{r}}$.
For a function $f \in L^2(\partial \Omega)$ we consider the following problem: to find $u_{\Omega}[f]$ such that
\begin{equation*}P1:
\begin{cases}
\Delta u_\Omega[f] + k^2 u_\Omega[f] = 0, \qquad \textmd{in } \Omega\backslash B_{\hat{r}} \\
u_\Omega[f] = g,\quad \textmd{on } \pa B_{\hat{r}} \\
C_Du_\Omega[f] + C_N\partial_\nu u_\Omega[f] = f,\quad \textmd{on } \pa \Omega,
\end{cases}
\end{equation*}
where $C_D$ and $C_N$ may take the value 0 or 1 according to the choice of the boundary condition
to impose in $\pa \Omega$ (Dirichlet or Neumann boundary condition). The function $f$ is the unknown of the problem and it must be chosen so that the corresponding solution  $u_\Omega[f]$ minimizes the integral

\begin{equation}\label{integrale_num}
J(u_\Omega[f])=\ints_{\Omega\backslash B_{\hat{r}} } \Big| \nabla u_\Omega[f] (x) - iku_\Omega[f] (x) \frac{x}{|x|}\Big|^2dx.
\end{equation}

We solve numerically the problem by using a (CGM) conjugate gradient method (see for instance \cite{At}). In particular, we need to evaluate $J^{'}$ (the Fr\'echet derivative of $J$), and then consider the following two problems:

\begin{equation*}\hspace*{-0.4cm}P2:
\begin{cases}
\Delta p + k^2 p = -J^{'}, \qquad \textmd{in } \Omega\backslash B_{\hat{r}} ,\\
p = 0,\quad \textmd{on } \pa B_{\hat{r}},\\
C_Dp + C_N\partial_\nu p = 0,\quad \textmd{on } \pa \Omega,
\end{cases}P3:
\begin{cases}
\Delta \delta [u] + k^2 \delta [u] = 0, \qquad \textmd{in } \Omega\backslash B_{\hat{r}} ,\\
\delta [u] = 0,\quad \textmd{on } \pa B_{\hat{r}},\\
C_D\delta [u] + C_N\partial_\nu \delta [u] = w_0,\quad \textmd{on } \pa \Omega.
\end{cases}
\end{equation*}

Problem $P2$ is the adjoint problem to Problem $P1$ while Problem $P3$ computes the small
variation $\delta [u]$ of the solution during the iterative process of the CGM algorithm which is exposed below:
\begin{algorithm}
\caption{Conjugate gradient algorithm}
\begin{algorithmic}[1]
\REQUIRE initial guess $f=f_0$, tolerance $\epsilon$
\STATE Solve $P1$
\STATE Evaluate $J^{'}(u_\Omega[f])$
\STATE Solve P2.
\STATE Set $g_0:=p$,$w_0:=p,m=1$
\WHILE {stopping criteria $\gamma<\epsilon$ is not satisfied}
\STATE Solve $P3$
\STATE Evaluate $J^{'}(\delta [u])$
\STATE Solve $P2$
\STATE Set: $\rho=<g_{m-1},w_{m-1}>_{L^2}/<(C_Np+C_D\partial_ {\nu}p),w_{m-1}>_{L^2}$,\\
 $g_m=g_{m-1}-\rho (C_Np+C_D\partial_{\nu}p) $
\STATE Set: $u_\Omega[f]=u_\Omega[f]-\rho \delta [u],\quad f=f_0-\rho w_{m-1}$
\STATE Set: $\gamma=<g_{m},w_{m-1}>_{L^2}/\|w_{m-1}\|_{L^2}^2$
\STATE Set: $w_m=g_{m}+\gamma w_{m-1}$
\STATE $m:=m+1$
\ENDWHILE
\end{algorithmic}
\end{algorithm}

To solve Problems $P1,P2,P3$ we use a finite elements formulation; numerical simulations are performed through FREEFEM$^{\textmd{\textcopyright}}$
software. The
finest grid used contains up to $2\cdot10^5$ grid points, and we adapt the grid close to the artificial boundaries and
where the large gradients of the solution form. Each simulation is firstly performed in a coarser grid and then the grid is gradually refined until a
fully grid-independence is reached in the numerical results. We also have checked in which way the choice of the Dirichlet
 or Neumann condition on $\partial{\Omega}$ affects the numerical results, and
 we have verified that this choice does not influence the final result in the case of smooth domains.
However, Neumann boundary conditions are in general to prefer when one deals with coarse grid, as spurious oscillation are
likely to manifest close to the boundary in the case of Dirichlet boundary condition.
For this reason, we present all the result by imposing Neumann condition on $\partial \Omega$ ($C_N=1,C_D=0$).

We shall present numerical results for three choices of the artificial domain $\Omega$: a ball, an ellipse and a square. In each simulation $\epsilon=10^{-8}$ is the tolerance in Algorithm 1 and the Dirichlet boundary condition on $\pa B_{\hat{r}}$ is given
by $g(\theta)=\cos(j\theta)$, being $j$ an integer.  The radius of $\pa B_{\hat{r}}$ is $\hat{r}=1/2$.

The convergence properties of our method are tested through the evaluation of the usual error norms and
the relative error of the integral \eqref{integrale_num}, that is we compute:
\begin{eqnarray*}
 L^2&=&(\int_{\Lambda}\|u_\Omega[f]-u_{exact} \|^2d\Lambda)^{1/2},\\
L^2_{rel}&=&L^2/(\int_{\Lambda}\|u_{exact} \|^2d\Lambda)^{1/2},\\
H^1&=&(\int_{\Lambda}\|u_\Omega[f]-u_{exact} \|^2+\|\nabla(u_\Omega[f]-u_{exact}) \|^2d\Lambda)^{1/2},\\
H^1_{rel}&=&H^1/(\int_{\Lambda}\|u_{exact} \|^2+\|\nabla u_{exact}) \|^2d\Lambda)^{1/2},\\
\delta J_{rel}&=&J(u_\Omega[f]-u_{exact})/J(u_{exact}).
\end{eqnarray*}

We shall evaluate these errors in the whole domain $\Omega\backslash B_{1/2}$ and in the
annular domain $\Omega_{1/2}^1=B_{1}\backslash B_{1/2}$ and we check how the numerical solution converges to the exact one and how
the shape of the artificial boundary of $\Omega$ affects this convergence.
In the following paragraphs we shall present the results in the case in which $\Omega$ is a ball, an ellipse, and a square.
We have performed in each case the convergence analysis for the values $j=0,2,3$, and $k=0.5,1,2$.

\paragraph{Balls}
The first case presented is relative to the annulus $\Omega_{1/2}^R=B_{R}\backslash B_{1/2}$. The numerical
simulations are performed up to $R=8$.
In Tables \ref{tableconvDJ0} we report the errors on $\Omega^1_{1/2}$ and $\Omega_{1/2}^R$ for $R=1,2,4,8$ (the $L^2_{rel},H^1_{rel}$ errors on $\Omega^1_{1/2}$ are also shown in Figs.\ref{errorL2H1d1})a-f in lin-log scale).

\begin{table}[ht]
\tablefont{2.5mm}
\begin{center}
   \hspace*{-1.7cm} \begin{tabular}{|l|l|l||l|l|l|l|l||l|l|l|l|l||}
\multicolumn{3}{|c||}{} & \multicolumn{5}{|c||}{Constant index of refraction:} & \multicolumn{5}{|c||}{Constant index of refraction:} \\
\multicolumn{3}{|c||}{} & \multicolumn{5}{|c||}{Errors on $\Omega_{1/2}^1$ ($\cdot 10^{-1}$)} & \multicolumn{5}{|c||}{Errors on $\Omega_{1/2}^R$($\cdot 10^{-1}$)} \\ \hline
$j$ &$k$ &    $R$ &$L^{2}$ & $L^{2}_{rel}$ & $H^1$ & $H^1_{rel}$ & $\delta J_{rel}$ &$L^{2}$ & $L^{2}_{rel}$ & $H^1$ & $H^1_{rel}$ & $\delta J_{rel}$ \\ \hline

 0 &0.5 &     $1$ & $0.664$ & $0.509$ & $2.06$ & $1.21$ &$3.22$& $ $ & $ $ & $ $ & $ $ &$ $\\
 &&      $2$ & $0.341$ & $0.259$ & $1.05$& $0.615$& $1.21$& $1.61 $ & $0.679 $ & $2.06 $ & $0.714 $ &$0.859 $\\
  &&      $4$ & $0.161$ & $0.123$ & $0.498$&$0.292$&$0.854$& $1.95 $ & $0.52 $ & $2.07 $ & $0.471 $ &$0.0981 $\\
 &&      $8$ & $0.113$ & $0.0865$ & $0.352$& $0.205$ &$0.499$& $1.72 $ & $0.308 $ & $1.97 $ & $0.309 $ &$0.0228 $\\
\hline
 0 &1 &     $1$ & $0.421$ & $0.331$ & $1.19$ & $0.602$& $2.25$& $ $ & $ $ & $ $ & $ $ &$ $\\
  & &     $2$ & $0.211$ & $0.163$ & $0.642$&$0.308$& $1.17$& $0.851 $ & $0.374 $ & $1.09 $ & $0.315 $ &$0.932 $\\
& &     $4$ & $0.139$ & $0.107$ & $0.423$&$0.203$&$0.941$& $0.801 $ & $0.226 $ & $0.121 $ & $0.231 $ &$0.205 $\\
  & &     $8$ & $0.0661$ & $0.0512$ & $0.201$&$0.0967$&$0.452$& $0.589 $ & $0.112 $ & $0.841 $ & $0.111 $ &$0.0887 $\\
\hline
0 &2 &    $1$ & $0.261$ & $0.205$ & $0.767$ &$0.251$& $0.112$& $ $ & $ $ & $$ & $$ &$ $\\
 & &    $2$ & $0.138$ & $0.108$ & $0.407$& $0.133$& $0.836$& $0.318 $ & $0.143 $ & $0.699 $ & $0.135 $ &$0.0621 $\\
 &&     $4$ & $0.0788$ & $0.0621$ & $0.232$&$0.0759$& $0.485$& $0.265 $ & $0.0777 $ & $0.596 $ & $0.0805 $ &$0.00921 $\\
 & &     $8$ & $0.0178$ & $0.0141$ & $0.0526$&$0.0172$&$0.112$& $0.0933 $ & $0.0186 $ & $0.232 $ & $0.0179 $ &$0.00143 $\\
\hline
 2 &.5 &     $1$ & $1.88$ & $3.41$ & $7.77$ & $3.13$ &$0.582$& $ $ & $ $ & $ $ & $ $ &$ $\\
 &&      $2$ & $0.211$ & $0.383$ & $0.872$& $0.351$& $0.158$& $1.68$ & $2.96$ & $3.71$ & $1.43$ &$0.0447$\\
  &&      $4$ & $0.0209$ & $0.0379$ & $0.0863$&$0.0348$&$0.0654$& $1.23$ & $1.85$ & $1.58$ & $0.609$ &$0.0172$\\
 &&      $8$ & $0.00327$ & $0.00595$ & $0.00135$& $0.00545$ &$0.00168$& $0.693$ & $0.993$ & $0.753$ & $0.289$ &$0.00525$\\
\hline
 2 &1 &     $1$ & $1.86$ & $0.323$ & $7.62$ & $3.09$& $0.687$& $ $ & $ $ & $ $ & $ $ &$ $\\
  & &     $2$ & $0.245$ & $0.425$ & $1.01$&$0.407$& $0.168$& $1.48$ & $2.69$ & $3.62$ & $1.38$ &$0.0181$\\
& &     $4$ & $0.0431$ & $0.0749$ & $0.176$&$0.0716$&$0.0134$& $1.21$ & $1.49$ & $1.59$ & $0.596$ &$0.0625$\\
  & &     $8$ & $0.0218$ & $0.0379$ & $0.895$&$0.0363$&$0.107$& $0.878$ & $0.905$ & $1.21$ & $0.435$ &$0.0249$\\
\hline
2 &2 &    $1$ & $1.66$ & $2.51$ & $6.59$ &$2.61$& $0.905$& $ $ & $ $ & $ $ & $ $ &$ $\\
 & &    $2$ & $0.357$ & $0.539$ & $1.41$& $0.559$& $0.0836$& $1.59$ & $1.61$ & $3.13$ & $1.03$ &$0.401$\\
 &&     $4$ & $0.185$ & $0.281$ & $0.736$&$0.291$& $0.0282$&  $1.21$ & $0.933$ & $2.58$ & $0.715$ &$0.163$\\
 & &     $8$ & $0.102$ & $0.155$ & $0.406$&$0.161$&$0.0172$& $0.924$ & $0.519$ & $2.12$ & $0.464$ &$0.0801$\\
\hline

 3 &.5 &     $1$ & $1.56$ & $3.63$ & $8.81$ & $2.66$ &$0.0143$& $ $ & $ $ & $ $ & $ $ &$ $\\
 &&      $2$ & $0.0446$ & $0.103$ & $0.232$& $0.0757$& $0.00951$& $0.721$ & $1.618$ & $1.88$ & $0.608$ &$0.00897$\\
  &&      $4$ & $0.00113$ & $0.00263$ & $0.00591$&$0.00192$&$0.000453$& $0.252$ & $0.564$ & $0.387$ & $0.124$ &$0.000984$\\
 &&      $8$ & $0.000051$ & $0.000118$ & $0.000266$& $0.0000865$ &$0.000021$& $0.0957$ & $0.214$ & $0.108$ & $0.0348$ &$0.000064$\\
\hline
 3 &1 &     $1$ & $1.61$ & $20573$ & $8.33$ & $2.71$& $0.0561$& $ $ & $ $ & $ $ & $ $ &$ $\\
  & &     $2$ & $0.0569$ & $0.131$ & $0.294$&$0.0959$& $0.0351$& $0.822$ & $0.181$ & $0.208$ & $0.673$ &$0.00781$\\
& &     $4$ & $0.00288$ & $0.00658$ & $0.0149$&$0.004816$&$0.000396$& $0.352$ & $0.763$ & $0.511$ & $0.164$ &$0.00347$\\
  & &     $8$ & $0.000921$ & $0.00211$ & $0.00477$&$0.00155$&$0.000216$& $0.211$ & $0.452$ & $0.299$ & $0.0963$ &$0.000988$\\
\hline
3 &2 &    $1$ & $1.73$ & $3.65$ & $8.71$ &$2.83$& $0.367$& $ $ & $ $ & $ $ & $ $ &$ $\\
 & &    $2$ & $0.121$ & $0.255$ & $0.609$& $0.198$& $0.0654$& $1.09$ & $2.04$ & $2.55$ & $0.808$ &$0.127$\\
 &&     $4$ & $0.0415$ & $0.0878$ & $0.209$&$0.0681$& $0.00143$&  $0.707$ & $1.19$ & $1.58$ & $0.494$ &$0.0401$\\
 & &     $8$ & $0.0181$ & $0.0383$ & $0.0913$&$0.0297$&$0.000956$& $0.456$ & $0.661$ & $1.09$ & $0.305$ &$0.0188$\\
\hline
\end{tabular}
\caption{The $L^2,L^2_{rel},H^1_{rel},\delta J_{rel}$ errors of the numerical solution on the annulus $\Omega_{1/2}^1$ and $\Omega_{1/2}^R$ by varying the exterior radius $R$ of the annular domain $\Omega_{1/2}^R$ for the case $j=0,2,3,k=0.5,1,2$.\label{tableconvDJ0}}
\end{center}
\end{table}

 \begin{figure}
\vspace*{-1cm}\subfigure[]{\hspace*{-2cm}\includegraphics[width=8cm,height=6cm]{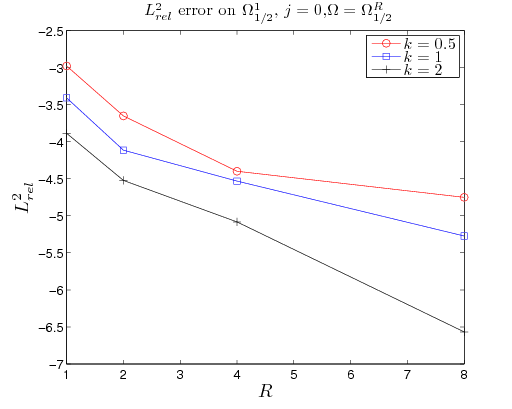}}
\subfigure[]{\hspace*{-0.5cm}\includegraphics[width=8cm,height=6cm]{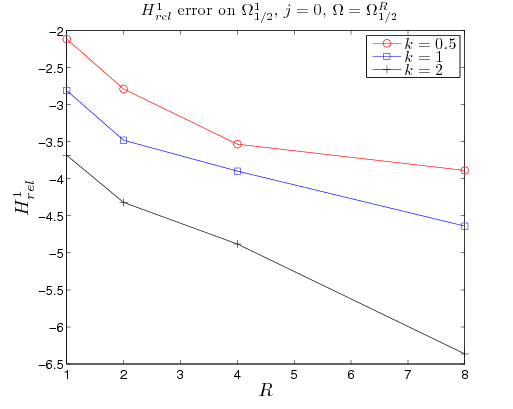}}
\subfigure[]{\hspace*{-2cm}\includegraphics[width=8cm,height=6cm]{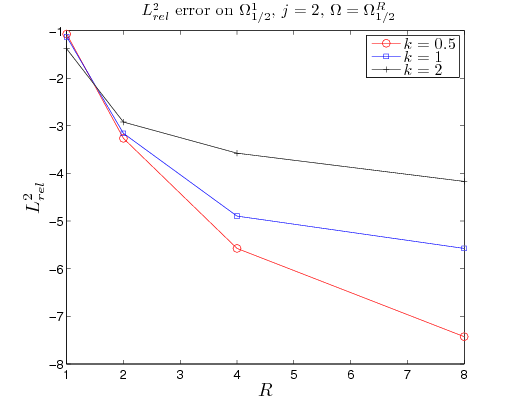}}
\subfigure[]{\hspace*{-0.5cm}\includegraphics[width=8cm,height=6cm]{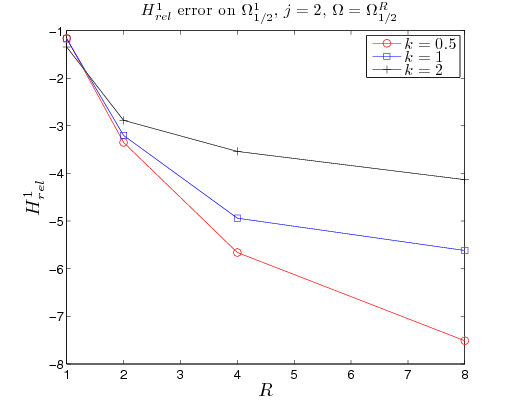}}
\subfigure[]{\hspace*{-2cm}\includegraphics[width=8cm,height=6cm]{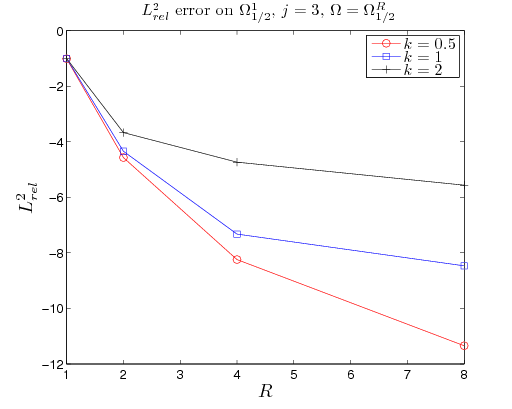}}
\subfigure[]{\hspace*{-0.5cm}\includegraphics[width=8cm,height=6cm]{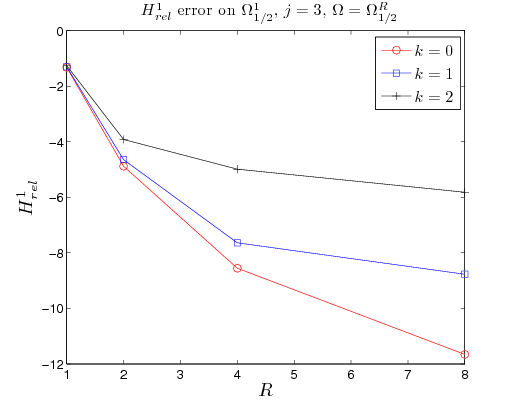}}
\caption{Numerical errors (lin-log scale) on the domain $\Omega_{1/2}^1$ by varying the exterior radius $R$ of the domain $\Omega_{1/2}^R$. }
\label{errorL2H1d1}
 \end{figure}

\paragraph{Ellipses}

The second case presented is relative to the domain $E_{1/2}^R=\mathcal{E}_{R}\backslash B_{1/2}$ where $\mathcal{E}_{R}$ is the ellipses
of equation $x^2/(2R)^2+y^2/(R)^2=1$. The numerical
simulations are performed up to $R=8$. We mention that, with respect to the previous case,
 the CGM method is slightly slower to reach the desired tolerance.
In Tables \ref{tableconvEJ0} we report the errors on $\Omega^1_{1/2}$ and $E_{1/2}^R$ for $R=1,2,4,8$ (the $L^2_{rel},H^1_{rel}$ errors on $\Omega^1_{1/2}$ are also shown in Figs.\ref{errorL2H1e1})a-f in lin-log scale).

\begin{table}[ht]
\tablefont{2.5mm}
\begin{center}
   \hspace*{-1.7cm} \begin{tabular}{|l|l|l||l|l|l|l|l||l|l|l|l|l||}
\multicolumn{3}{|c||}{} & \multicolumn{5}{|c||}{Constant index of refraction:} & \multicolumn{5}{|c||}{Constant index of refraction:} \\
\multicolumn{3}{|c||}{} & \multicolumn{5}{|c||}{Errors on $\Omega_{1/2}^1$ ($\cdot 10^{-1}$)} & \multicolumn{5}{|c||}{Errors on $E_{1/2}^R$($\cdot 10^{-1}$)} \\ \hline
$j$ &$k$ &    $R$ &$L^{2}$ & $L^{2}_{rel}$ & $H^1$ & $H^1_{rel}$ & $\delta J_{rel}$ &$L^{2}$ & $L^{2}_{rel}$ & $H^1$ & $H^1_{rel}$ & $\delta J_{rel}$ \\ \hline

 0 &0.5 &     $1$ & $0.572$ & $0.435$ & $1.781$ & $1.04$ &$2.11$& $1.27 $ & $0.711 $ & $2.31 $ & $1.03 $ &$2.39 $\\
 &&      $2$ & $0.291$ & $0.221$ & $0.902$& $0.528$& $0.966$& $2.24 $ & $0.761 $ & $2.646 $ & $0.751 $ &$0.516 $\\
  &&      $4$ & $0.142$ & $0.108$ & $0.441$ & $0.258$ & $0.49$ & $2.27$ & $0.503$ & $2.51$ & $0.478$  & $0.011 $\\
 &&      $8$ & $0.0798$ & $0.0607$ & $0.245$& $0.144$ &$0.581$& $1.54 $ & $0.232 $ & $1.731 $ & $0.229 $ &$0.0098 $\\
\hline
 0 &1 &     $1$ & $0.382$ & $0.296$ & $1.19$ & $0.574$& $2.78$& $0.0807 $ & $0.0467 $ & $1.55 $ & $0.576 $ &$0.721 $\\
  & &     $2$ & $0.179$ & $0.139$ & $0.553$&$0.265$& $1.15$& $1.061 $ & $0.379 $ & $1.45 $ & $0.345 $ &$0.12$\\
& &     $4$ & $0.0905$ & $0.0701$ & $0.275$&$0.132$&$0.641$& $0.671 $ & $0.159 $ & $0.967 $ & $0.156 $ &$0.095 $\\
  & &     $8$ & $0.0496$ & $0.0384$ & $0.151$&$0.0726$&$0.231$& $0.549 $ & $0.0881 $ & $0.76 $ & $0.0863 $ &$0.052 $\\
\hline
0 &2 &    $1$ & $0.219$ & $0.172$ & $0.689$ &$0.225$& $0.432$& $0.405 $ & $0.241 $ & $0.944$ & $0.237$ &$0.192 $\\
 & &    $2$ & $0.115$ & $0.0907$ & $0.342$& $0.112$& $0.211$& $0.332 $ & $0.122 $ & $0.749 $ & $0.119 $ &$0.0921 $\\
 &&     $4$ & $0.0585$ & $0.0461$ & $0.173$&$0.0523$& $0.115$& $0.249 $ & $0.0611 $ & $0.559 $ & $0.0603 $ &$0.0531 $\\
 & &     $8$ & $0.00601$ & $0.00474$ & $0.00178$&$0.00582$&$0.0371$& $0.045 $ & $0.00772 $ & $0.101 $ & $0.0076 $ &$0.00821 $\\
\hline

 2 &0.5 &     $1$ & $1.282$ & $2.232$ & $5.12$ & $2.051$ &$0.449$& $2.18 $ & $3.71 $ & $6.42 $ & $2.53 $ &$0.252 $\\
 &&      $2$ & $0.161$ & $0.292$ & $0.548$& $0.221$& $0.0541$& $1.52 $ & $2.35 $ & $2.37 $ & $0.921 $ &$0.0471 $\\
  &&      $4$ & $0.0239$ & $0.0435$ & $0.0795$&$0.0321$&$0.00249$ & $1.25$ & $1.81$ & $1.51$ & $0.579$ & $0.0131 $\\
 &&      $8$ & $0.00964$ & $0.0175$ & $0.0305$& $0.0123$ &$0.00125$& $0.715 $ & $0.991 $ & $0.794 $ & $0.303 $ &$0.0032 $\\
\hline
 2 &1 &     $1$ & $1.27$ & $2.21$ & $5.05$ & $2.05$& $0.525$& $2.21 $ & $3.47 $ & $6.45 $ & $2.54 $ &$0.411 $\\
  & &     $2$ & $0.0186$ & $0.324$ & $0.679$&$0.275$& $0.072$& $1.97 $ & $2.61 $ & $3.31 $ & $1.25 $ &$0.145$\\
& &     $4$ & $0.0371$ & $0.0658$ & $0.133$&$0.0541$&$0.0211$& $0.844 $ & $0.951 $ & $1.17 $ & $0.432 $ &$0.0621 $\\
  & &     $8$ & $0.0261$ & $0.0451$ & $0.0921$&$0.0372$&$0.00925$& $0.842 $ & $0.775 $ & $1.09 $ & $0.32 $ &$0.0211 $\\
\hline
2 &2 &    $1$ & $1.14$ & $1.73$ & $4.51$ &$1.78$& $0.634$& $2.04 $ & $2.54 $ & $6.22$ & $2.25$ &$0.757 $\\
 & &    $2$ & $0.325$ & $0.491$ & $1.24$& $0.492$& $0.0211$& $1.76$ & $1.58$ & $3.81$ & $1.16$ &$0.301 $\\
 &&     $4$ & $0.168$ & $0.253$ & $0.636$&$0.251$& $0.0132$& $1.19$ & $0.781 $ & $2.66 $ & $0.652 $ &$0.122 $\\
 & &     $8$ & $0.00182$ & $0.00275$ & $0.00711$&$0.00279$&$0.0026$& $0.0291 $ & $0.0141$ & $0.0661 $ & $0.0127 $ &$0.0089 $\\
\hline

 3 &0.5 &     $1$ & $0.942$ & $2.18$ & $4.51$ & $1.46$ &$0.0831$& $1.44 $ & $3.28 $ & $5.83 $ & $1.83 $ &$0.0431 $\\
 &&      $2$ & $0.0361$ & $0.0841$ & $0.127$& $0.0412$& $0.00354$& $0.394 $ & $0.884 $ & $0.729 $ & $0.235 $ &$0.00211 $\\
  &&      $4$ & $0.00251$ & $0.00581$ & $0.00849$&$0.00272$&$0.00249$ & $0.145$ & $0.326$ & $0.178$ & $0.0575$ & $0.000221$\\
 &&      $8$ & $0.000482$ & $0.00112$ & $0.00163$& $0.00053$ &$0.00000647$& $0.0895 $ & $0.201 $ & $0.101 $ & $0.0324 $ &$0.0000921 $\\
\hline
 3 &1 &     $1$ & $0.887$ & $2.02$ & $4.19$ & $1.36$& $0.141$& $1.42 $ & $3.18 $ & $5.57 $ & $1.81 $ &$0.0312 $\\
  & &     $2$ & $0.0584$ & $0.133$ & $0.217$&$0.0707$& $0.0112$& $0.774 $ & $1.68 $ & $1.56 $ & $0.503 $ &$0.00778$\\
& &     $4$ & $0.00471$ & $0.0107$ & $0.0168$&$0.0054$&$0.00127$& $0.281 $ & $0.603 $ & $0.394 $ & $0.126 $ &$0.00197 $\\
  & &     $8$ & $0.00265$ & $0.00605$ & $0.00921$&$0.00299$&$0.000201$& $0.185 $ & $0.392 $ & $0.258 $ & $0.0831 $ &$0.000705 $\\
\hline
3 &2 &    $1$ & $1.02$ & $2.16$ & $4.98$ &$1.62$& $0.321$& $1.82 $ & $3.64 $ & $7.03$ & $2.25$ &$0.239 $\\
 & &    $2$ & $1.04$ & $0.221$ & $0.461$& $0.151$& $0.0201$& $1.17$ & $2.09$ & $2.71$ & $0.851$ &$0.0941 $\\
 &&     $4$ & $0.0415$ & $0.0878$ & $0.178$&$0.0581$& $0.00201$& $0.638$ & $1.07 $ & $1.41 $ & $0.431 $ &$0.0301 $\\
 & &     $8$ & $0.000811$ & $0.0171$ & $0.0272$&$0.000881$&$0.00000286$& $0.0178 $ & $0.0237$ & $0.0401 $ & $0.0118 $ &$0.00168 $\\
\hline
\end{tabular}
\caption{The $L^2,L^2_{rel},H^1_{rel},\delta J_{rel}$ errors of the numerical solution on  $\Omega_{1/2}^1$ and $E_{1/2}^R$ by varying the semi-minor axis $R$ of $E_{1/2}^R$ for the case $j=0,2,3,k=0.5,1,2$.\label{tableconvEJ0}}
\end{center}
\end{table}

 \begin{figure}
\vspace*{-1cm}\subfigure[]{\hspace*{-2cm}\includegraphics[width=8cm,height=6cm]{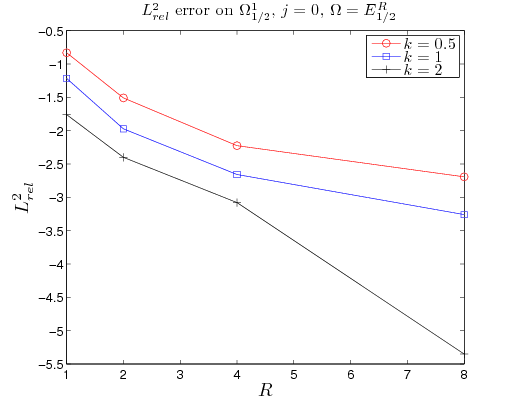}}
\subfigure[]{\includegraphics[width=8cm,height=6cm]{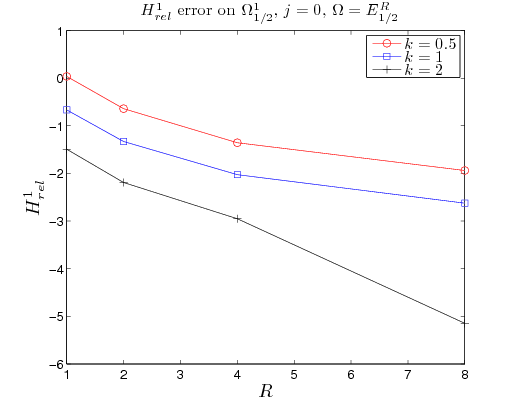}}
\subfigure[]{\hspace*{-2cm}\includegraphics[width=8cm,height=6cm]{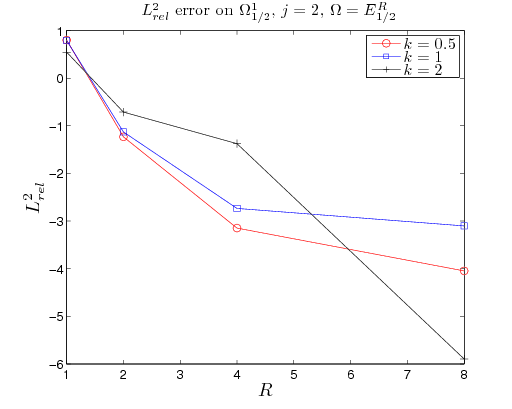}}
\subfigure[]{\includegraphics[width=8cm,height=6cm]{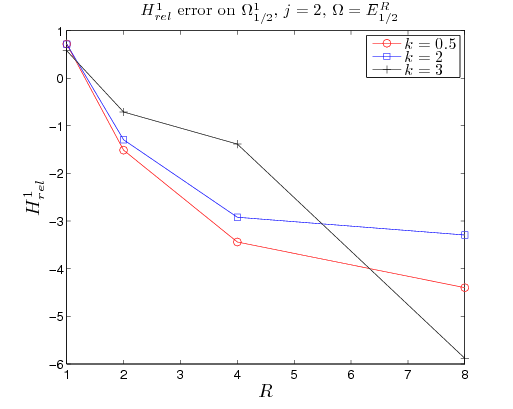}}
\subfigure[]{\hspace*{-2cm}\includegraphics[width=8cm,height=6cm]{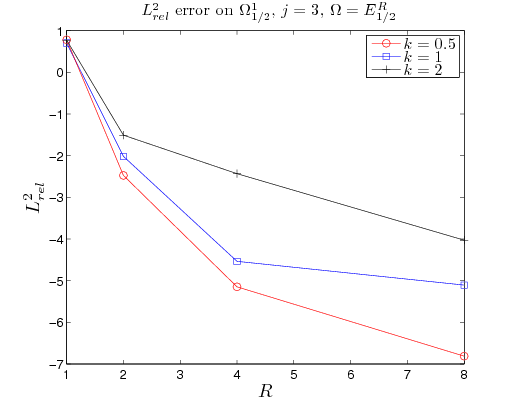}}
\subfigure[]{\includegraphics[width=8cm,height=6cm]{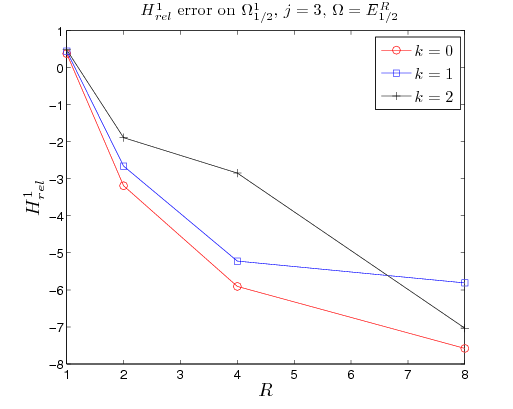}}
\caption{Numerical errors (lin-log scale) on the domain $\Omega_{1/2}^1$ by varying the semi-minor axis $R$ of $E_{1/2}^R$. }
\label{errorL2H1e1}
 \end{figure}

\paragraph{Squares}
We conclude this section by showing a numerical investigation on the domain $S_{1/2}^R=Q_{R}\backslash B_{1/2}$ where $Q_{R}$ is the square centered
in (0,0) and length side $2R$. The numerical
simulations are performed up to $R=8$. Even if our approach can be applied to any domain, we mention
that the presence of corners or edges deteriorate the convergence properties of the CGM method,
at least if compared to the case in which the artificial boundary is regular (circle or ellipses). We also notice that imposing the Dirichlet boundary condition on the problem P1 (i.e. $C_D=1,C_N=0$) gives rise to a relevant oscillatory behavior of the solution near the corners due to the lack of regularity of the domain, while Neumann boundary gives a more regular behavior in the numerical solution.
In Tables \ref{tableconvQJ0} we report the errors on $\Omega^1_{1/2}$ and $S_{1/2}^R$ by varying the
 semi-length side $R$ of $Q_R$ for $R=1,2,4,8$
(the $L^2_{rel},H^1_{rel}$ errors on $\Omega^1_{1/2}$ are also shown in Figs.\ref{errorL2H1s1})a-f in lin-log scale).

\begin{table}[ht]
\tablefont{2.5mm}
\begin{center}
   \hspace*{-1.9cm} \begin{tabular}{|l|l|l||l|l|l|l|l||l|l|l|l|l||}
\multicolumn{3}{|c||}{} & \multicolumn{5}{|c||}{Constant index of refraction:} & \multicolumn{5}{|c||}{Constant index of refraction:} \\
\multicolumn{3}{|c||}{} & \multicolumn{5}{|c||}{Errors on $\Omega_{1/2}^1$ ($\cdot 10^{-1}$)} & \multicolumn{5}{|c||}{Errors on $S_{1/2}^R$($\cdot 10^{-1}$)} \\ \hline
$j$ &$k$ &    $R$ &$L^{2}$ & $L^{2}_{rel}$ & $H^1$ & $H^1_{rel}$ & $\delta J_{rel}$ &$L^{2}$ & $L^{2}_{rel}$ & $H^1$ & $H^1_{rel}$ & $\delta J_{rel}$ \\ \hline

 0 &0.5 &     $1$ &0.631&0.481&1.95&1.14&2.81&0.879&0.594&2.14&1.13&2.82 \\
 &&      $2$ & 0.321&0.243&0.986&0.577&0.571&1.84&0.717&2.269&0.727&0.645\\
  &&      $4$ & 0.153&0.116&0.471&0.276&0.231&2.07&0.516&2.22&0.476&0.0810\\
 &&      $8$ & 0.101&0.0771&0.312&0.182&0.0721&1.65&0.278&1.88&0.277&0.0451\\
\hline
 0 &1 &     $1$ & 0.0421&0.327&1.302&0.625&0.806&0.581&0.402&1.431&0.621&1.01\\
  & &     $2$ & 0.197&0.153&0.603&0.289&0.606&0.944&0.381&1.23&0.331&0.606\\
& &     $4$ & 0.129&0.101&3.93&0.181&0.431&0.701&0.201&1.181&0.212&0.0151\\
  & &     $8$ & 0.0467&0.0362&0.142&0.0681&0.0832&0.489&0.0881&0.684&0.0854&0.00521 \\
\hline
 0 &2 &     $1$ & 0.241 &0.188&0.729&0.238&1.935&0.317&0.223&0.798&0.235&0.0254 \\
 &&      $2$ & 0.139&0.109&0.411&0.134&0.851&0.336&0.141&0.798&0.143&0.0105\\
  &&      $4$ & 0.0497&0.0391&0.146&0.0479&0.321&0.207&0.0561&0.473&0.0571&0.0526\\
 &&      $8$ & 0.0320&0.0251&0.03942&0.0308&0.0851&0.176&0.0332&0.395&0.0322&0.0212\\
\hline

 2 &0.5 &     $1$& 1.745& 3.17&7.26&2.92&0.594&2.01&3.59&8.53&3.41&0.0297\\
 &&      $2$ & 0.193&0.351&0.799&0.322&0.143&1.91&3.02&3.96&1.53&0.0143\\
  &&      $4$ & 0.0192&0.0351&0.0796&0.0321&0.0675&1.25&1.88&1.65&0.636&0.00954\\
 &&      $8$ & 0.00315&0.00572&0.0131&0.00525&0.0016&0.695&0.993&0.764&0.293&0.00438\\
\hline
 2 &1 &     $1$ &1.73 &3.07&7.14&2.89&0.681&1.98&3.39&8.39&3.35&0.129 \\
  & &     $2$ &0.227&0.394&0.931&0.377&0.149&1.94&2.74&3.91&1.491&0.105\\
& &     $4$ & 0.0415&0.072&0.171&0.0691&0.0133&1.22&1.48&1.66&0.618&0.0441\\
  & &     $8$ & 0.0141&0.0245&0.0578&0.0234&0.0079&0.654&0.663&0.904&0.323&0.014 \\
\hline
 2 &2 &     $1$ & 1.51&2.37&6.25&2.47&0.912&1.76&2.587&7.08&2.71&0.574\\
  & &     $2$ & 0.344&0.52&1.365&0.539&0.0624&1.62&1.67&3.37&1.10&0.355\\
& &     $4$ & 0.139&0.211&0.553&0.218&0.0144&1.02&0.771&2.19&0.59&0.125\\
  & &     $8$ & 0.0961&0.144&0.381&0.151&0.0852&0.891&0.492&2.01&0.435&0.0788 \\
\hline

 3 &0.5 &     $1$ &1.17&2.71&6.13&1.99&0.97&1.63&3.75&7.56&2.45&0.107\\
 &&      $2$ & 0.0422&0.0981&0.188&0.0611&0.0132&0.704&1.58&1.69&0.546&0.00874\\
  &&      $4$ & 0.00237&0.00551&0.00861&0.00279&0.00292&0.247&0.554&0.393&0.115&0.000123\\
 &&      $8$ & 0.000191&0.000445&0.000664&0.000216&0.000112&0.0962&0.215&0.109&0.0351&0.0000232\\
\hline
 3 &1 &     $1$ & 1.21&2.76&6.31&2.05&0.148&1.69&3.81&7.76&2.51&0.042 \\
  & &     $2$ &0.0504&0.115&0.234&0.0761&0.0201&0.824&1.81&1.92&0.621&0.00602\\
& &     $4$ & 0.00304&0.00695&0.0137&0.00445&0.00986&0.356&0.771&0.521&0.167&0.00281\\
  & &     $8$ & 0.00108&0.00246&0.00437&0.00142&0.000146&0.172&0.367&0.241&0.0777&0.000712\\
\hline
 3 &2 &     $1$ &0.541&1.14&2.73&0.891&0.337&0.929&1.91&3.42&1.21&0.331\\
  & &     $2$ & 0.0988&0.208&0.497&0.162&0.0401&1.12&2.07&2.63&0.833&0.105\\
& &     $4$ & 0.0276&0.0583&0.138&0.0451&0.000102&0.501&0.931&1.25&0.392&0.189\\
  & &     $8$ & 0.0187&0.0396&0.0923&0.0301&0.000405&0.478&0.673&1.07&0.323&0.00782 \\
\hline
\end{tabular}
\caption{The $L^2,L^2_{rel},H^1_{rel},\delta J_{rel}$ errors of the numerical solution on  $\Omega_{1/2}^1$ and $S_{1/2}^R$ by varying the semi-lenght side $R$ of $S_{1/2}^R$ for the case $j=0,2,3,k=0.5,1,2$.\label{tableconvQJ0}}
\end{center}
\end{table}

 \begin{figure}
\vspace*{-1cm}\subfigure[]{\hspace*{-2cm}\includegraphics[width=8cm,height=6cm]{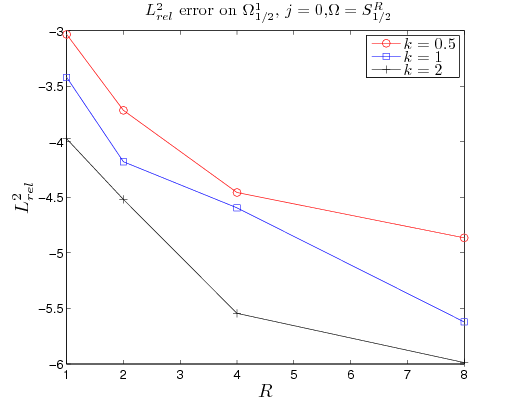}}
\subfigure[]{\includegraphics[width=8cm,height=6cm]{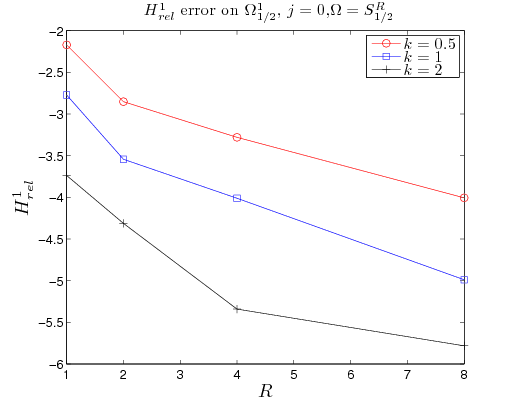}}
\subfigure[]{\hspace*{-2cm}\includegraphics[width=8cm,height=6cm]{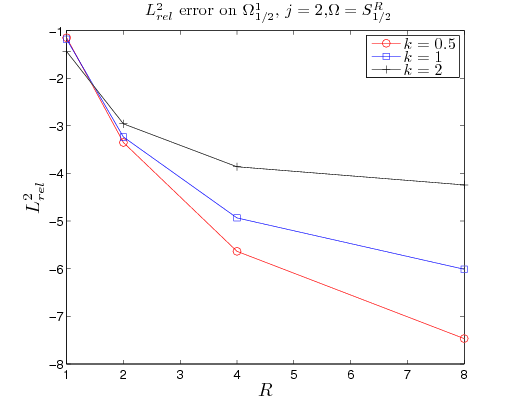}}
\subfigure[]{\includegraphics[width=8cm,height=6cm]{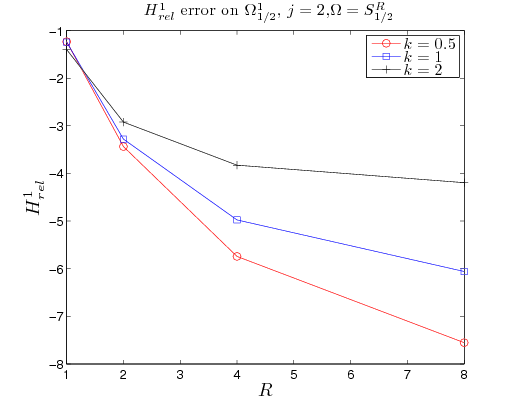}}
\subfigure[]{\hspace*{-2cm}\includegraphics[width=8cm,height=6cm]{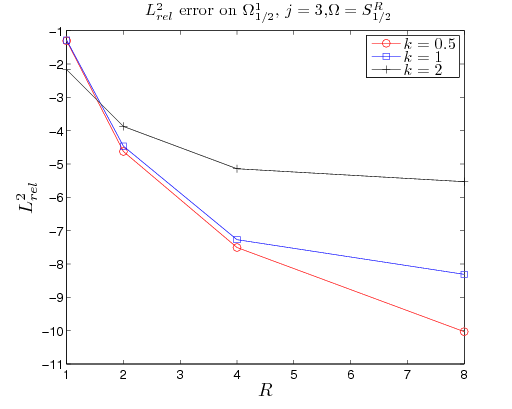}}
\subfigure[]{\includegraphics[width=8cm,height=6cm]{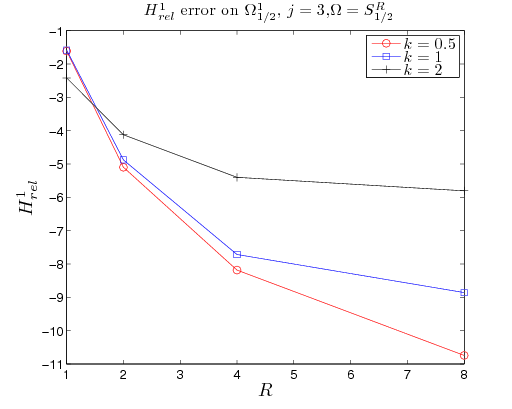}}
\caption{Numerical errors (lin-log scale) on the domain $\Omega_{1/2}^1$ by varying the semi-minor axis $R$ of $E_{1/2}^R$. }
\label{errorL2H1s1}
 \end{figure}

\section{Numerical results for a variable index of refraction} \label{section 4}
In this section we apply our approach to cases of variable indices of refraction, for which an exact solution is not generally given. We notice that most of the numerical methods available in literature can be of difficult application, since they usually need the knowledge of the exact solution in some exterior domain. We also mention that the perfectly matched layer might be of nontrivial implementation when the material is not an analytic function in the direction perpendicular to the boundary, see \cite{Jo},\cite{OZAJ}.

We stress that for a variable index of refraction it can be hard to compute the asymptotic behavior of the integrand function in \eqref{eq: intro rad con Perth Vega II} and, unlike the case of constant index of refraction, the weight $(1+|x|)^{-1}$ in the integral \eqref{eq: introd Funzionale} has to be maintained.

Thus, given the same notation as the previous section, we are solving the following problem: to find $f \in L^2(\partial \Omega)$ such that the solution $u_{\Omega}[f]$ of
\begin{equation*}
\begin{cases}
\Delta u_\Omega[f] + k^2n(x)^2 u_\Omega[f] = 0, \qquad \textmd{in } \Omega\backslash B_{\hat{r}} \\
u_\Omega[f] = g,\quad \textmd{on } \pa B_{\hat{r}} \\
\partial_\nu u_\Omega[f] = f,\quad \textmd{on } \pa \Omega,
\end{cases}
\end{equation*}
minimizes the integral
\begin{equation}\label{integrale_num}
J(u_\Omega[f])=\ints_{\Omega\backslash B_{\hat{r}} } \Big| \nabla u_\Omega[f] (x) - ikn(x)u_\Omega[f] (x) \frac{x}{|x|}\Big|^2\frac{1}{1+|x|}dx.
\end{equation}

In the following paragraphs we shall propose two indexes of refraction, namely
\begin{eqnarray*}
 n_1(x,y)&=&2+(\exp(-(x-1)^2-y^2)+\exp(-(x+1)^2-y^2)) \frac{x}{\sqrt{x^2+y^2}}, \\
n_2^a(x,y)&=&2+a\frac{x}{\sqrt{x^2+y^2}},
\end{eqnarray*}
being $a$ a non negative real number. These indexes are shown in Figs.\ref{indexrefr}a-b, with $a=0.1$. We stress that $n_1$ has an exponential decay to 2 at infinity and that $n_2^a$ it is not constant at infinity.
For both cases, the geometry of the computational domain is the annulus $\Omega_{1/2}^R=B_{R}\backslash B_{1/2}$ for which a better convergence rate is achieved in the CGM algorithm, as seen in the previous section for a constant index of refraction.

Being an exact solution not available, we test the convergence properties as follows: we set the solution for $R=16$ as the reference one, and we shall compute the error norms up to $R=8$ as done in the previous sections. We always consider $j=0,k=1$.\\
 \begin{figure}
\hspace*{-2cm}\subfigure[$n_1(x,y)$]{\includegraphics[width=8cm,height=6.5cm]{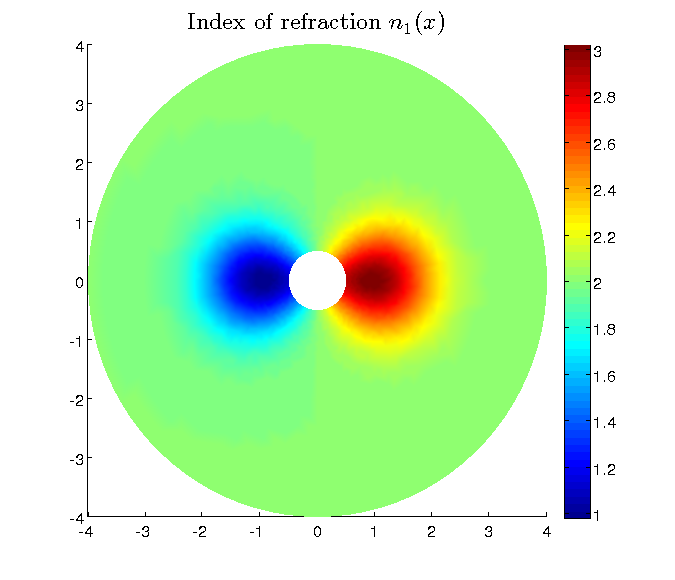}}
\subfigure[$n_2^{0.1}(x,y)$]{\includegraphics[width=8cm,height=6.5cm]{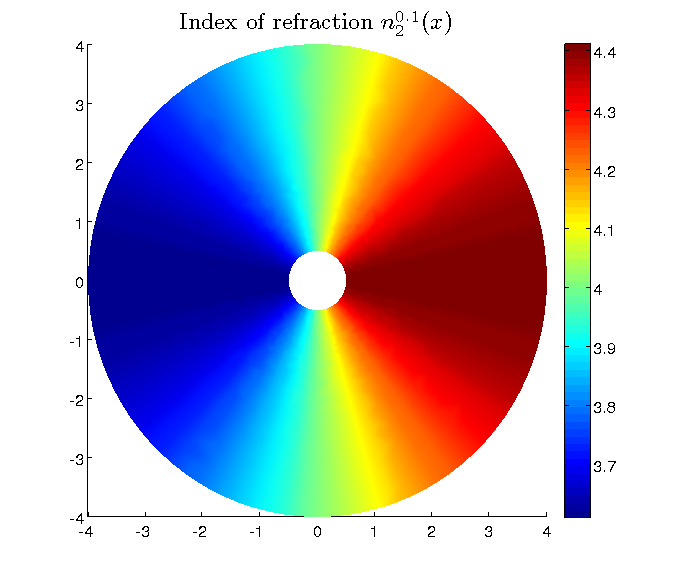}}
\caption{The indexes of refraction $n_1$ and $n_2^a$. \label{indexrefr}}
\end{figure}

We notice that the choice of the functional \eqref{integrale_num} is appropriate only when the index of refraction does not generate guided modes (trapped waves). Due to the exponential decay at infinity, this requirement is satisfied by $n_1$ (see \cite{Ag} and \cite{Sa0}). Regarding the index $n_2^a$ it is known from \cite{PV} that there is a critical value $a^{gm}$ according to which no guided modes are generated for $a\leq a^{gm}$.
It is not easy to give a quantitative estimate of $a^{gm}$. By evaluating the integral \eqref{integrale_num} for several values of $a$ (see Fig.\ref{integralen3_loglin.eps} where the integral is shown at various exterior radius $R$ in log-lin scale), it seems that for $a\gtrapprox 0.6$ the integral diverge as $R\rightarrow\infty$. This suggests to consider $a\lessapprox0.6$ for which it appears that the functional is bounded as $R\to \infty$. The numerical results for $n_2^a$ are therefore shown only for the small value $a=0.1$: if we are luckily in the case in which no guided modes are generated, our approach works well and the numerical solution converges to the exact one. More accurate estimates on $a^{gm}$ will be the object of future work.

 \begin{figure}
\begin{center}
\includegraphics[width=10cm,height=9.5cm]{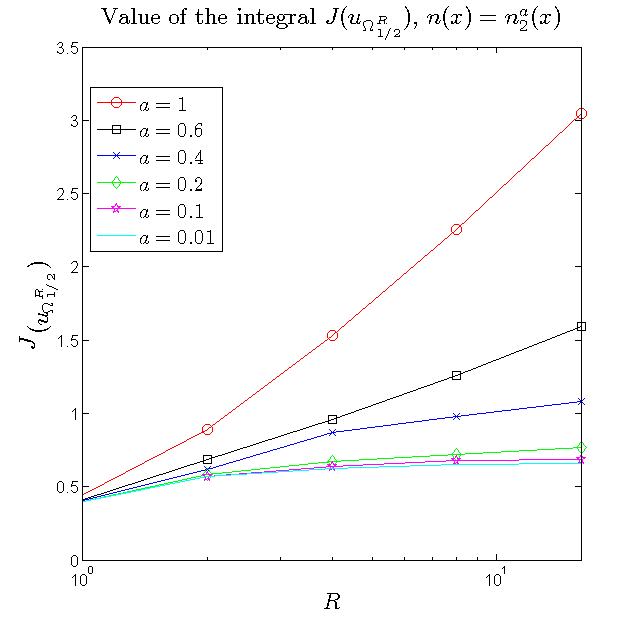}
\caption{The integral value  $J(u_{\Omega_{1/2}^R[f]})$, by varying the exterior radius $R$ for various $a$ (log-lin scale): for $a\gtrapprox0.6$ a likely numerical divergence is noticeable.}
\label{integralen3_loglin.eps}
\end{center}
\end{figure}

\paragraph{Case \textbf{$n_1(x)$}:}

 In Tables \ref{tableconvn1} we report the errors on $\Omega^1_{1/2}$ and $\Omega_{1/2}^R$ for $R=1,2,4,8$, (the $L^2_{rel},H^1_{rel}$ errors on $\Omega^1_{1/2}$
are also shown in Fig.\ref{errorn1})a) in lin-log scale.
In Figs.\ref{solutionn1}a-b the real and imaginary parts of the numerical solution $u_{\Omega_{1/2}^{R}}[f]$
are shown for $R=16$

\begin{table}[ht]
\tablefont{2.5mm}
\begin{center}
   \hspace*{-1.7cm} \begin{tabular}{|l|l|l||l|l|l|l|l||l|l|l|l|l||}
\multicolumn{3}{|c||}{} & \multicolumn{5}{|c||}{Variable index of refraction $n_1(x)$:} & \multicolumn{5}{|c||}{Variable index of refraction $n_1(x)$:} \\
\multicolumn{3}{|c||}{} & \multicolumn{5}{|c||}{Errors on $\Omega_{1/2}^1$ ($\cdot 10^{-1}$)} & \multicolumn{5}{|c||}{Errors on $\Omega_{1/2}^R$ ($\cdot 10^{-1}$)} \\ \hline
$j$ &$k$ &    $R$ &$L^{2}$ & $L^{2}_{rel}$ & $H^1$ & $H^1_{rel}$ & $\delta J_{rel}$ &$L^{2}$ & $L^{2}_{rel}$ & $H^1$ & $H^1_{rel}$ & $\delta J_{rel}$ \\ \hline
 0 &1 &     $1$ & $0.734$ & $0.582$ & $2.85$ & $1.01$& $0.088$& $ $ & $ $ & $ $ & $ $ &$ $\\
  & &     $2$ & $0.284$ & $0.225$ & $1.12$&$0.403$& $0.156$& $1.21 $ & $0.559 $ & $1.93$ & $0.499$ &$0.672$\\
& &     $4$ & $0.174$ & $0.137$ & $0.502$& $0.177 $ &$0.0511$&$0.821$& $0.242$ & $1.49$ & $0.229$ & $0.157 $ \\
  & &     $8$ & $0.0656$ & $0.0521$ & $0.203$&$0.0728$&$0.00602$& $0.411 $ & $0.0826 $ & $0.791$ & $0.0842$ &$0.00193 $\\
\hline
\end{tabular}
\caption{The $L^2,L^2_{rel},H^1_{rel},\delta J_{rel}$ errors of the numerical solution on the annulus $\Omega_{1/2}^1$ and $\Omega_{1/2}^R$ by varying the exterior radius $R$ of the annular domain $\Omega_{1/2}^R$ for the variable index of refraction $n_1(x)=2+(\exp(-(x-1)^2-y^2)+\exp(-(x+1)^2-y^2))\cos(\theta)$.\label{tableconvn1}}
\end{center}
\end{table}

 \begin{figure}
\hspace*{-2cm}\subfigure[]{\includegraphics[width=8cm,height=6.5cm]{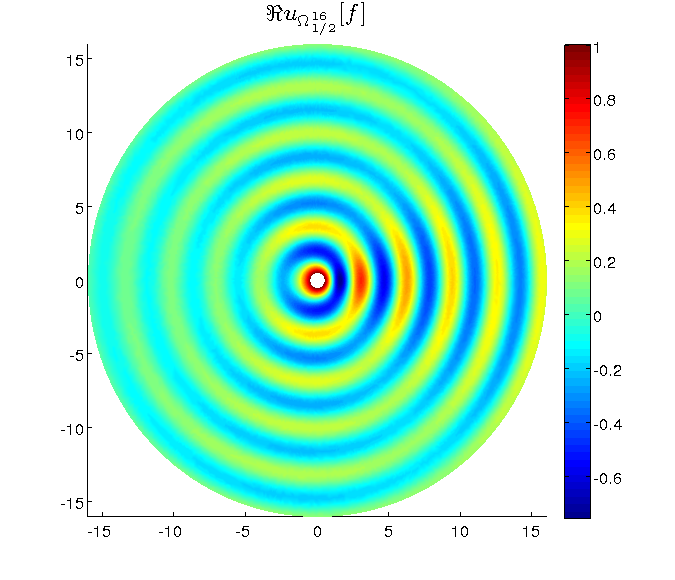}}
\subfigure[]{\includegraphics[width=8cm,height=6.5cm]{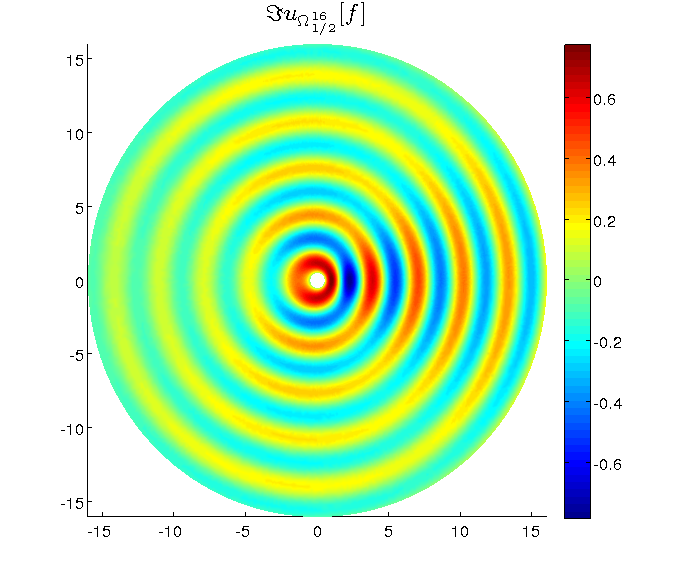}}
\caption{The real and imaginary parts of the numerical solution $u_{\Omega_{1/2}^{16}}$,index of refraction $n_1(x,y)$. }
\label{solutionn1}
 \end{figure}

 \begin{figure}
\begin{center}
\includegraphics[width=10cm,height=7.5cm]{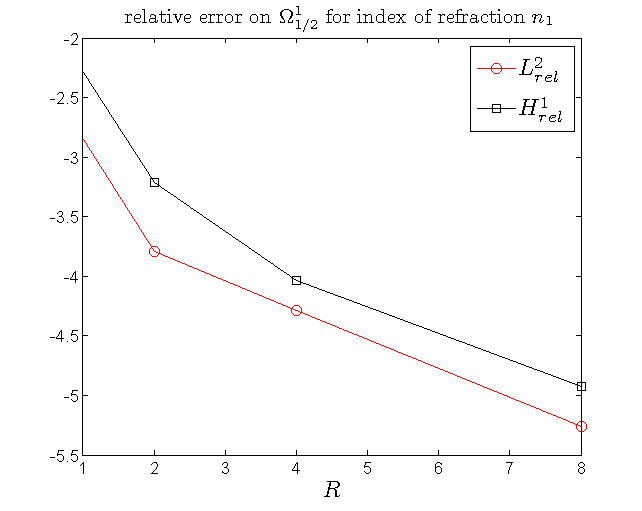}
\end{center}
\caption{Numerical errors (lin-log scale) of the solution for variable index of refraction $n_1$ on the domain $\Omega_{1/2}^1$ by varying the radious $R$ of $\Omega_{1/2}^R$. }
\label{errorn1}
 \end{figure}

\paragraph{Case \textbf{$n_2^{0.1}(x)$}:}

 In Tables \ref{tableconvn3} we report the errors on $\Omega^1_{1/2}$ and $\Omega_{1/2}^R$ for $R=1,2,4,8$  (the $L^2_{rel},H^1_{rel}$ errors on $\Omega^1_{1/2}$
are also shown in Fig.\ref{errorn3})b) in lin-log scale.
In Figs.\ref{solutionn3}a-b the real and imaginary parts of the numerical solution $u_{\Omega_{1/2}^{R}}[f]$
are shown for $R=16$.

\begin{table}[ht]
\tablefont{2.5mm}
\begin{center}
   \hspace*{-1.7cm} \begin{tabular}{|l|l|l||l|l|l|l|l||l|l|l|l|l||}
\multicolumn{3}{|c||}{} & \multicolumn{5}{|c||}{Variable index of refraction $n_2^{0.1}(x)$:} & \multicolumn{5}{|c||}{Variable index of refraction $n_2^{0.1}(x)$:} \\
\multicolumn{3}{|c||}{} & \multicolumn{5}{|c||}{Errors on $\Omega_{1/2}^1$ ($\cdot 10^{-1}$)} & \multicolumn{5}{|c||}{Errors on $\Omega_{1/2}^R$ ($\cdot 10^{-1}$)} \\ \hline
$j$ &$k$  &     $R$ &$L^{2}$ & $L^{2}_{rel}$ & $H^1$ & $H^1_{rel}$ & $\delta J_{rel}$ &$L^{2}$ & $L^{2}_{rel}$ & $H^1$ & $H^1_{rel}$ & $\delta J_{rel}$ \\ \hline
 0 &1 &    $1$ &0.522&0.408&1.65&0.582&0.826& $ $ & $ $ & $ $ & $ $ &$ $ \\
  & &     $2$ &0.221&0.179&0.715&0.249&0.442&0.531&0.237&1.03&0.238&0.0925 \\
& &    $4$ & 0.121&0.0951&0.402&0.141&0.273&0.421&0.122&0.0801&0.128&0.0833 \\
  & &    $8$ &0.0872&0.0681&0.245&0.0861&0.168&0.478&0.0951&0.911&0.101&0.0161 \\
\hline
\end{tabular}
\caption{The $L^2,L^2_{rel},H^1_{rel},\delta J_{rel}$ errors of the numerical solution on the annulus $\Omega_{1/2}^1$ and $\Omega_{1/2}^R$ by varying the exterior radius $R$ of the annular domain $\Omega_{1/2}^R$ for the variable index of refraction $n_2^{0.1}(x)=2+0.1\cos(\theta)$}
\label{tableconvn3}
\end{center}
\end{table}

 \begin{figure}
\hspace*{-2cm}\subfigure[$n_2^{0.1}(x,y)$]{\includegraphics[width=8cm,height=6.5cm]{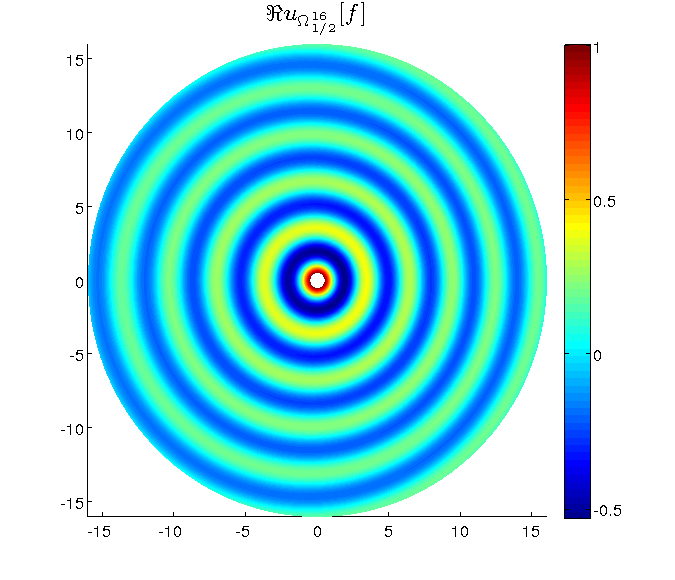}}
\subfigure[$n_2^{0.1}(x,y)$]{\includegraphics[width=8cm,height=6.5cm]{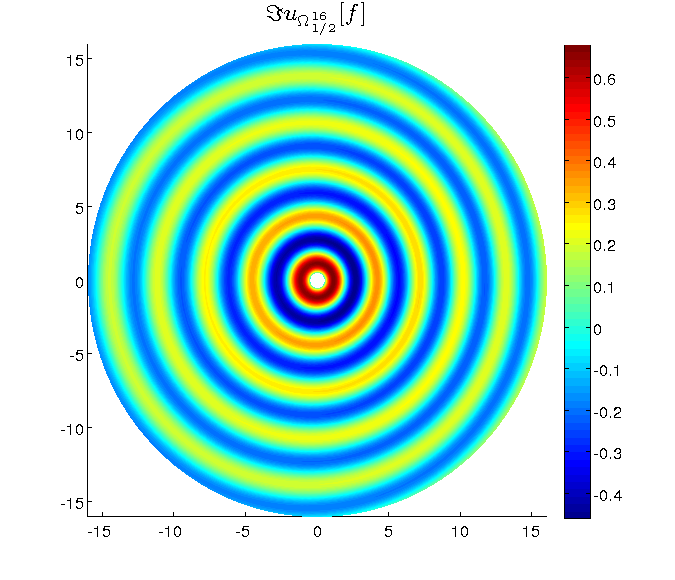}}
\caption{The real and imaginary parts of the numerical solution $u_{\Omega_{1/2}^{16}}$, index of refraction $n_2^{0.1}(x,y)$.}
\label{solutionn3}
 \end{figure}

 \begin{figure}
\begin{center}
\includegraphics[width=10cm,height=7.5cm]{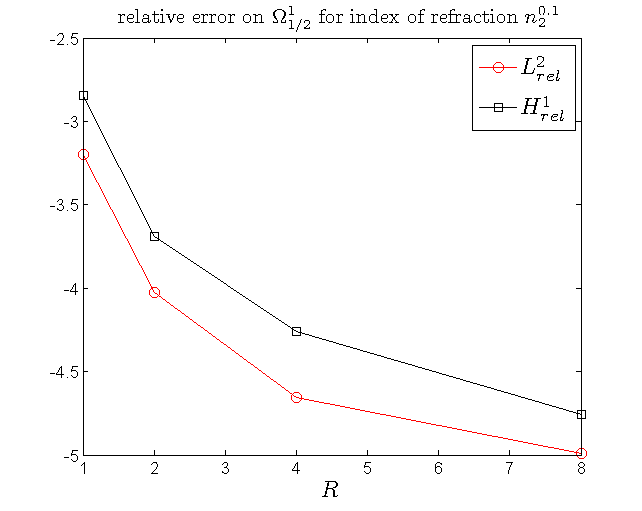}
\end{center}
\caption{Numerical errors (lin-log scale) of the solution for variable index of refraction $n_2^{0.1}$ on the domain $\Omega_{1/2}^1$ by varying the radious $R$ of $\Omega_{1/2}^R$. }
\label{errorn3}
 \end{figure}

\section*{Conclusions}
We studied a new approach to the problem of transparent boundary conditions for the Helmholtz equation. Our method is based on the minimization of an integral functional which arises from a volume integral formulation of the Sommerfeld radiation condition. When the artificial domain is a ball, we proved analytical results on the convergence of the approximate solution to the exact one under quite general assumptions on the index of refraction.
 We tested numerically our method for a constant index of refraction by choosing three shapes of the artificial boundary in $\RR^2$: a ball, an ellipse and a square. The convergence results are satisfactory; in particular we observed a good convergence rate for the $H^1_{rel}$ norm.
 We also applied our approach to other indices of refraction, one exponentially decaying at infinity to a constant, and one not constant at infinity.

Since our approach is of easy implementation and has wide application, we believe that it can be a good starting point for studying problems with nontrivial indices of refraction. However, when the index of refraction is constant, the methods available in literature are more suitable. We stress that numerical results show that our approach do not suffer the choice of the shape of the artificial domain.

Our approach applies also to more general settings. In particular, we have in mind the waveguide problem. In this case, the functional $J$ has to be modified on account of the presence of guided modes (see \cite{AC},\cite{Ci1}-\cite{CM2}); this will be the object of future work.

\end{document}